\DeclareRobustCommand{\SkipTocEntry}[5]{}
\newenvironment{tz}[1][]{%
			\begin{tikzpicture}[baseline={([yshift=-.8ex]current bounding 					box.center)},#1] %
				}{%
			\end{tikzpicture} %
			}
     \gdef\node@@on@layer{%
      \setbox\tikz@tempbox=\hbox\bgroup\pgfonlayer{#1}\unhbox\tikz@tempbox\endpgfonlayer\egroup}
\def\node@on@layer{\aftergroup\node@@on@layer}
\def\calign@preamble{%
   &\hfil\strut@
    \setboxz@h{\@lign$\m@th\displaystyle{##}$}%
    \ifmeasuring@\savefieldlength@\fi
    \set@field
    \hfil
    \tabskip\alignsep@
}
\let\cmeasure@\measure@
\patchcmd\cmeasure@{\divide\@tempcntb\tw@}{}{}{}
\patchcmd\cmeasure@{\divide\@tempcntb\tw@}{}{}{}
\patchcmd\cmeasure@{\ifodd\maxfields@
  \global\advance\maxfields@\@ne
  \fi}{}{}{}    
\tikzset{
    master/.style={
        execute at end picture={
            \coordinate (lower right) at (current bounding box.south east);
            \coordinate (upper left) at (current bounding box.north west);
        }
    },
    slave/.style={
        execute at end picture={
            \pgfresetboundingbox
            \path (upper left) rectangle (lower right);
        }
    }
}
\tikzset{blob/.style={draw, circle, fill=white, inner sep=1pt, minimum width=15pt, font=\scriptsize, line width=0.7pt}}
\tikzset{greenregion/.style={fill=green, fill opacity=0.3, draw=none}}
\tikzset{redregion/.style={fill=red, fill opacity=0.3, draw=none}}
\tikzset{blueregion/.style={fill=blue, fill opacity=0.3, draw=none}}
\tikzset{yellowregion/.style={fill=yellow, fill opacity=0.5, draw=none}}
\tikzset{cyanregion/.style={fill=cyan, fill opacity=0.3, draw=none}}
\tikzset{orangeregion/.style={fill=orange, fill opacity=0.6, draw=none}}
\tikzset{solidgreenregion/.style={fill=green!30, fill opacity=1, draw=none}}
\tikzset{solidredregion/.style={fill=red!30, fill opacity=1, draw=none}}
\tikzset{solidblueregion/.style={fill=blue!30, fill opacity=1, draw=none}}
\tikzset{solidyellowregion/.style={fill=yellow!30, fill opacity=1, draw=none}}
\tikzset{string/.style={line width=0.7pt}}
\tikzset{zig/.style={decoration={zigzag,segment length=3, amplitude=0.5}}}
\tikzset{bnd/.style={draw,string}}   
\tikzset{projector/.style={circle, draw, font=\scriptsize, inner sep=-5pt, minimum width=0.35cm, string, fill=white}}
\tikzset{dimension/.style={font=\scriptsize, inner sep=1pt}}
\renewcommand{\to}[1][]{\ensuremath{\xrightarrow{#1}}}
\newcommand{\To}[1][]{\ensuremath{\xRightarrow{#1}}}
\theoremstyle{plain} 
\newtheorem{maintheorem}{Theorem}
\newtheorem{maincor}[maintheorem]{Corollary}
\newtheorem{mainquestion}[maintheorem]{Question}
\newtheorem{theorem}{Theorem}[section]
\newtheorem{corollary}[theorem]{Corollary}          
\newtheorem{cor}[theorem]{Corollary}          
\newtheorem{prop}[theorem]{Proposition}
\theoremstyle{definition} 
\newtheorem{definition}[theorem]{Definition}
\theoremstyle{remark}  
\newtheorem{mainremark}[maintheorem]{Remark}
\newtheorem{remark}[theorem]{Remark}
\newtheorem*{guide*}{Guide}
\newtheorem*{outline*}{Outline}
\newtheorem*{remarkohc*}{Remark on higher categories and the cobordism hypothesis}
\newtheorem*{assumpfield*}{Assumptions on the base field}
\newtheoremstyle{special_statement} 
	{\topskip}
	{\topskip}
	{\addtolength{\leftskip}{2.5em} \itshape }
	{}
	{\bfseries}
	{:}
	{.5em}
	{}
\theoremstyle{special_statement}
\newcommand{\nid}{\noindent}
\DeclareMathOperator{\Hom}{Hom}
\DeclareMathOperator{\End}{End}
\newcommand{\id}{\mathrm{id}}
\newcommand{\Aut}{\mathrm{Aut}}
\newcommand{\Vect}{\mathrm{Vect}}
\newcommand{\tHilb}{\mathrm{2Hilb}}
\newcommand{\op}{\mathrm{op}}
\newcommand{\Hilb}{\mathrm{Hilb}}
\def\cA{\mathcal A}\def\cB{\mathcal B}\def\cC{\mathcal C}\def\cD{\mathcal D}
\def\cM{\mathcal M}\def\cN{\mathcal N}
\def\cU{\mathcal U}
\def\cZ{\mathcal Z}
\def\CC{\mathbb C}
\def\FF{\mathbb F}\def\GG{\mathbb G}
\def\PP{\mathbb P}
\def\RR{\mathbb R}
\DeclareFontFamily{OMX}{MnSymbolE}{}
\DeclareSymbolFont{MnLargeSymbols}{OMX}{MnSymbolE}{m}{n}
\DeclareFontShape{OMX}{MnSymbolE}{m}{n}{
    <-6>  MnSymbolE5
   <6-7>  MnSymbolE6
   <7-8>  MnSymbolE7
   <8-9>  MnSymbolE8
   <9-10> MnSymbolE9
  <10-12> MnSymbolE10
  <12->   MnSymbolE12
}{}
\DeclareFontShape{OMX}{MnSymbolE}{b}{n}{
    <-6>  MnSymbolE-Bold5
   <6-7>  MnSymbolE-Bold6
   <7-8>  MnSymbolE-Bold7
   <8-9>  MnSymbolE-Bold8
   <9-10> MnSymbolE-Bold9
  <10-12> MnSymbolE-Bold10
  <12->   MnSymbolE-Bold12
}{}
\let\llangle\@undefined
\let\rrangle\@undefined
\DeclareMathDelimiter{\llangle}{\mathopen}%
                     {MnLargeSymbols}{'164}{MnLargeSymbols}{'164}
\DeclareMathDelimiter{\rrangle}{\mathclose}%
                     {MnLargeSymbols}{'171}{MnLargeSymbols}{'171}
\newcommand{\iso}{\cong}
\newcommand{\EQ}{\underline{\mathrm{Eq}}}
\newcommand{\Eq}{\mathrm{Eq}}
\newcommand{\EQBR}{\underline{\mathrm{EqBr}}}
\newcommand{\BrPic}{\underline{\mathrm{BrPic}}}
\newcommand{\ignore}[1]{}
\def\dl{-135}
\def\dr{-45}
\def\ul{135}
\def\ur{45}
\def\h{3}
\tikzset{td/.style={
					y={(0.4cm, 0.6cm)},
					x={(-1cm, 0cm)},					
                    z  = {(0cm,1cm)},
                    scale = 0.5}}
\tikzset{slice/.style={draw = gray!50, line width = 0.7pt}}
\tikzset{xyplane/.style={canvas is yx plane at z=#1}}
\tikzset{xzplane/.style={canvas is yz plane at x=#1}}
\tikzset{yzplane/.style={canvas is xz plane at y=#1}}
\tikzset{short/.style={shorten >=-0.26*\lw,shorten <=-0.25*\lw}}    
\def\lw{1.4pt}  
\tikzset{wire/.style={black, line width=0.7pt}}
\tikzset{dot/.style={circle, scale=0.15, fill=black, thick, draw}}
\tikzset{tmor/.style={scale=0.6, color=black}}   
\tikzset{obj/.style={scale=0.6, color=gray}}
\tikzset{omor/.style={scale=0.6, color=black}}   
\newcommand{\lact}{\vartriangleright}
\newcommand{\Cs}{\mathrm{C}^*}
\newcommand{\Ws}{\mathrm{W}^*}
\title{Uniqueness of unitary structure for unitarizable fusion categories}
\author{David Reutter}
\address{Max Planck Institute for Mathematics, Bonn}
\email{david.reutter@mpim-bonn.mpg.de}
\urladdr{https://www.davidreutter.com}
\begin{document}

\maketitle	

\begin{abstract}We show that every unitarizable fusion category, and more generally every semisimple $\Cs$-tensor category, admits a unique unitary structure. Our proof is based on a categorified polar decomposition theorem for monoidal equivalences between such categories. We prove analogous results for unitarizable braided fusion categories and module categories.
\end{abstract}

\section*{Introduction}

A unitary fusion category is a fusion category over the complex numbers with a positive $*$-structure. Such unitary structures naturally arise in many applications and constructions \ignore{and classifications }of fusion categories, most notably in the context of operator algebras, subfactor theory, and mathematical physics. Not every fusion category admits a unitary structure; the Yang-Lee category~\cite[Ex 8.18.7, Ex 9.4.6]{EGNO} is a famous example of a non-unitary fusion category. Conversely, a fusion category could, in principle, have more than one unitary structure. Especially in light of the recent powerful operator algebraic classification techniques of unitary fusion categories~\cite{EG1,EG2,Jonessurvey,subfactors,IzumiCuntz}, the question of uniqueness of unitary structure has become a significant open problem~~\cite{mathoverflow, CompleteUnitary, Galindo, PenneysHenriques}. (The explicit statement of Theorem~\ref{thm:functor} appears as Question 2.8 in~\cite{PenneysHenriques}.) A partial answer for weakly group theoretical fusion categories was obtained in~\cite{CompleteUnitary}.

In this paper, we completely address the general question and prove that every unitarizable\footnote{A \emph{unitarizable} fusion category is a fusion category which admits a compatible positive $*$-structure.} fusion category, and more generally every semisimple $\Cs$-tensor category,  admits a unique unitary structure. We also extend our techniques to unitarizable braided fusion categories and unitarizable module categories.

\subsection*{Unitary fusion categories}

\begin{maintheorem}\label{thm:functor} Every monoidal equivalence between unitary fusion categories is monoidally naturally isomorphic to a unitary monoidal equivalence. \\In particular, any unitarizable fusion category admits a unique unitary structure (up to unitary monoidal equivalence).
\end{maintheorem}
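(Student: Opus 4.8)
The plan is to prove the first, stronger assertion; the uniqueness of unitary structure is then immediate, since two unitary structures on a fixed fusion category are the same data as two unitary fusion categories $\cC_1,\cC_2$ together with a monoidal equivalence $\cC_1\to\cC_2$ of their underlying fusion categories, and if every such equivalence is monoidally naturally isomorphic to a unitary one then the two unitary structures coincide up to unitary monoidal equivalence.

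For the first assertion, fix a monoidal equivalence $F\colon\cC\to\cD$ of unitary fusion categories. The goal is a \emph{categorified polar decomposition} $F\cong U\circ P$ of monoidal functors, with $U\colon\cC\to\cD$ a unitary monoidal equivalence and $P$ a ``positive'' monoidal autoequivalence of $\cC$, together with the fact that any such $P$ is monoidally naturally isomorphic to $\id_\cC$. The easy half concerns only the underlying functor: each $F\colon\Hom(x,y)\to\Hom(Fx,Fy)$ is a linear isomorphism of finite-dimensional Hilbert spaces, and a polar-decomposition argument for these maps --- whose coherence is automatic from the uniqueness of positive square roots --- shows that, after a monoidal natural isomorphism, we may assume the underlying functor of $F$ is a $\dagger$-functor. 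The sole remaining obstruction to unitarity of $F$ is then that the tensorator $J_{x,y}\colon Fx\otimes Fy\to F(x\otimes y)$ need not be unitary; choosing a monoidal quasi-inverse $G$ whose tensorator comes from the inverse-adjoint of $J$ and setting $P:=G\circ F$ yields a monoidal autoequivalence of $\cC$ whose underlying functor is monoidally naturally isomorphic to $\id_\cC$ and whose tensorator is a positive natural automorphism of $\otimes$ (morally $J^\dagger J$), the unitary part being $U:=F\circ P^{-1}$.

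The crux --- and the step I expect to be genuinely hard --- is to show that such a positive monoidal autoequivalence $P$ is monoidally naturally isomorphic to $\id_\cC$. After the normalisation above, $P$ is encoded by a positive natural automorphism $\mu_{x,y}\in\Aut(x\otimes y)$ of the tensor bifunctor satisfying a pentagon-type cocycle identity, and one must produce a positive natural family $\nu_x\in\Aut(x)$ with $\mu_{x,y}=(\nu_x\otimes\nu_y)\circ\nu_{x\otimes y}^{-1}$: a non-abelian $2$-cocycle must be trivialised by a \emph{positive} $1$-cochain. The leverage is once more the uniqueness of positive square roots: $P$ admits a canonical positive $n$-th root $P^{1/n}$ for every $n$, all the operators $\mu_{x,y}^{1/n}$ being invertible and tending to the identity as $n\to\infty$, so that self-adjoint logarithms linearise the cocycle identity --- exactly in the limit, approximately before it --- and reduce the problem to trivialising an additive self-adjoint cocycle, with uniqueness of the roots ensuring that the resulting cochain is natural and that the trivialisations at successive levels are mutually compatible. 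Essentially all the work will go into the coherence bookkeeping --- pentagon and unit constraints surviving each choice of positive square root --- and into the limiting argument, the underlying-functor reduction and the final assembly being comparatively routine. The braided and module-category statements then follow by running the same polar decomposition through the braiding, respectively through the module action, and checking that the uniquely-determined positive square roots are automatically compatible with that extra structure.
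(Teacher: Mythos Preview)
Your overall architecture---reduce to a $*$-functor, polar decompose the tensorator, then kill the positive monoidal autoequivalence---matches the paper exactly, and the first two steps are essentially as in the paper (the paper puts the positive part on the $\cD$ side rather than $\cC$, but this is cosmetic).

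The gap is in the crux step. Your proposed mechanism---take $n$-th roots $\mu^{1/n}$, pass to self-adjoint logarithms, and ``linearise the cocycle identity in the limit''---does not work as written, for two reasons. First, $\log$ is not a homomorphism on noncommuting positive operators, so the pentagon for $\mu$ does not become an additive cocycle identity for $\log\mu$; the phrase ``exactly in the limit, approximately before it'' papers over the fact that in the limit both sides are zero and no information survives. Second, and more fundamentally, even if one could reduce to an additive self-adjoint $2$-cocycle, you give no argument for why it would be a coboundary. In fact it need not be: for $\cC=\Hilb_G$ with $G$ infinite discrete, positive monoidal autoequivalences are classified by $H^2(G,\RR_{>0})$, which can be nonzero. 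So any correct argument \emph{must} invoke some finiteness specific to fusion categories, and your proposal never does.

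The paper's argument is short and uses exactly this finiteness: by the Etingof--Nikshych--Ostrik result that $\Eq(\cC)$ is finite for a fusion category, some power $\PP^n=(\id_\cC,p^n)$ is monoidally naturally isomorphic to the identity, i.e.\ $p^n_{c,c'}=\eta_{c\otimes c'}^{-1}(\eta_c\otimes\eta_{c'})$ for some natural isomorphism $\eta$. Replacing $\eta$ by $\mu:=\eta^\dagger\eta$ (and $n$ by $2n$) one obtains a \emph{positive} trivialisation of $p^{2n}$; uniqueness of positive $2n$-th roots then yields a positive $\epsilon$ with $p_{c,c'}=\epsilon_{c\otimes c'}^{-1}(\epsilon_c\otimes\epsilon_{c'})$. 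No limits or logarithms are needed.
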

Theorem~\ref{thm:functor} is proven in Section~\ref{sec:theorem:functor}. It essentially follows from a categorification of polar decomposition (Corollary~\ref{cor:polarsemisimple}): Every monoidal equivalence between semisimple $\Cs$-categories (with possibly infinitely many simple objects) can be factored as a unitary monoidal equivalence followed by a `positive monoidal auto-equivalence' (see Definition~\ref{def:positivemonoidal}). In particular, every semisimple $\Cs$-tensor category admits a unique $*$-structure. If $\cD$ is a unitary fusion category, it moreover follows from the finiteness of its group of monoidal auto-equivalences that every positive monoidal auto-equivalence of $\cD$ is trivial (Proposition~\ref{prop:functorstep2}), leading to Theorem~\ref{thm:functor}.

When preparing this manuscript, we became aware of an alternative, independent proof of Theorem~\ref{thm:functor} by Carpi, Ciamprone and Pinzari (to appear~\cite{Pinzari}) using the framework of weak quasi Hopf algebras.

Natural isomorphisms between unitary monoidal equivalences behave analogously.
\begin{maintheorem} \label{thm:trafo} Let $\FF,\GG:\cC \to \cD$ be unitary monoidal equivalences between unitary fusion categories.  Then, every monoidal natural isomorphism $\eta:\FF \To \GG$ is unitary.
\end{maintheorem}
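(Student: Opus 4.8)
The plan is to run, at the level of natural transformations, the same polar‑decomposition argument that underlies Theorem~\ref{thm:functor}: I would show that $\eta^\ast\circ\eta$ is a \emph{positive} monoidal natural automorphism of $\FF$, conclude that it is the identity, and hence get $\eta_X^\ast=\eta_X^{-1}$ for every $X$, i.e.\ that $\eta$ is unitary. (Equivalently one can polar‑decompose $\eta_X=u_X\,(\eta_X^\ast\eta_X)^{1/2}$ and argue that the positive part is a positive monoidal auto‑transformation of $\FF$; the two framings are the same.)

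The first and only genuinely load‑bearing step is to check that the componentwise adjoint $\eta^\ast$, with components $\eta_X^\ast:\GG X\to\FF X$, is again a monoidal natural transformation, now $\GG\To\FF$. Naturality of $\eta^\ast$ follows by taking adjoints of the naturality squares of $\eta$ and using that $\FF,\GG$ are $\ast$‑functors (every morphism of $\cC$ is of the form $f^\ast$). Monoidality is the point where unitarity of the functors is essential: taking adjoints of the coherence square $\GG_{X,Y}\circ(\eta_X\otimes\eta_Y)=\eta_{X\otimes Y}\circ\FF_{X,Y}$ and invoking that the monoidal coherence isomorphisms $\FF_{X,Y},\GG_{X,Y}$ (and $\FF_0,\GG_0$) of a \emph{unitary} monoidal equivalence are unitary — so that passing to adjoints inverts them — turns this square back into the coherence square for $\eta^\ast$. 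This is exactly why "unitary monoidal equivalence" cannot be weakened: a positive non‑unitary monoidal auto‑equivalence carries non‑unitary monoidal natural automorphisms.

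Granting this, $\theta:=\eta^\ast\circ\eta$ is a monoidal natural transformation $\FF\To\FF$ (vertical composites of monoidal natural transformations are monoidal), invertible because $\eta$ is, with each component $\theta_X=\eta_X^\ast\circ\eta_X$ a positive, invertible endomorphism of $\FF X$. The group $\Aut^\otimes(\FF)$ of monoidal natural automorphisms of $\FF$ is finite — whiskering by a quasi‑inverse identifies it with $\Aut^\otimes(\id_\cC)$, which is the character group of the finite universal grading group $U(\cC)$ — so $\theta$ has some finite order $N$. For $X$ simple, $\FF X$ is simple, hence $\theta_X$ is a positive scalar $\lambda_X$ with $\lambda_X^N=1$, forcing $\lambda_X=1$; by semisimplicity of $\cC$ and naturality of $\theta$ this propagates to $\theta_X=\id_{\FF X}$ for every object $X$. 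Thus $\eta_X^\ast\circ\eta_X=\id$, and since $\eta_X$ is invertible this gives $\eta_X^\ast=\eta_X^{-1}$, i.e.\ $\eta_X$ is unitary for all $X$.

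I expect the only real obstacle to be conceptual: recognizing that the right object to decompose is the positive automorphism $\eta^\ast\circ\eta$, and correspondingly that the monoidality of $\eta^\ast$ genuinely requires the coherence data of $\FF$ and $\GG$ to be unitary, not merely that $\FF,\GG$ are equivalences. The remaining ingredients — finiteness of $\Aut^\otimes(\FF)$ and triviality of positive finite‑order automorphisms — are routine and run parallel to the treatment of positive monoidal auto‑equivalences behind Theorem~\ref{thm:functor} (e.g.\ Proposition~\ref{prop:functorstep2}).
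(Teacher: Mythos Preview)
Your proof is correct and follows the paper's overall strategy---extract a positive monoidal natural automorphism from $\eta$ and kill it by a finiteness argument---but the execution differs in two minor, pleasant ways. First, the paper polar-decomposes $\eta=p\circ u$ with $p=\sqrt{\eta\eta^*}:\GG\To\GG$ and then applies Proposition~\ref{prop:trick} to the coherence equation to verify that $p$ is monoidal (Proposition~\ref{prop:trafostep1}); you instead check directly that $\eta^*:\GG\To\FF$ is monoidal (this is exactly where unitarity of the coherence isomorphisms $f,g$ enters) and compose to obtain $\theta=\eta^*\eta$, thereby bypassing Proposition~\ref{prop:trick}. Second, the paper transports the positive automorphism along the $*$-equivalence $F$ to a positive monoidal automorphism of $\id_\cC$ and then invokes the identification $\Aut^+(\id_\cC)\cong\Hom(\cU_\cC,\RR_{>0})$ (Proposition~\ref{prop:trafostep2}); you argue more directly that $\Aut^\otimes(\FF)\cong\Aut^\otimes(\id_\cC)\cong\Hom(\cU_\cC,\CC^\times)$ is finite, so a componentwise-positive element of finite order is the identity on simples and hence everywhere. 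Your route is slightly more self-contained; the paper's route has the advantage of recording the intermediate polar decomposition of monoidal natural isomorphisms as a reusable standalone statement.
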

Theorem~\ref{thm:trafo} is proven in Section~\ref{sec:theorem:trafo}. It is again a consequence of polar decomposition: Every monoidal natural isomorphism factors into a unitary monoidal natural isomorphism followed by a positive monoidal natural automorphism. It follows from the finiteness of the universal grading group of $\cD$ that there are no non-trivial positive monoidal natural automorphisms.

Adopting notation from~\cite{ENOHomotopy}, Theorems~\ref{thm:functor} and~\ref{thm:trafo} show that the $2$-groupoid $\EQ$ of unitarizable fusion categories, monoidal equivalences and monoidal natural isomorphisms is equivalent to the 2-groupoid $\EQ^\dagger$ of unitary fusion categories, unitary monoidal equivalences and unitary monoidal natural isomorphisms. 
\begin{maincor}\label{cor:Eq} The forgetful $2$-functor $\EQ^\dagger \to \EQ$ is an equivalence. 
\end{maincor}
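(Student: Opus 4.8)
The plan is to reduce the statement to the formal definition of an equivalence of $2$-groupoids and then read off each required property from results already proved. Recall that a $2$-functor $U\colon \mathcal A \to \mathcal B$ between $2$-groupoids is an equivalence exactly when it is essentially surjective on objects and, for each pair of objects $a,a'$, the induced functor $\mathcal A(a,a') \to \mathcal B(Ua,Ua')$ on hom-groupoids is an equivalence of categories; the latter in turn unwinds to essential surjectivity on $1$-morphisms together with bijectivity on $2$-morphisms (full faithfulness), and there is nothing further to verify since all higher cells are invertible. I would check these three conditions in turn for the forgetful $2$-functor $U\colon \EQ^\dagger \to \EQ$.

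First, essential surjectivity on objects is immediate from the definition of \emph{unitarizable}: a unitarizable fusion category admits a compatible positive $*$-structure, and choosing one exhibits it as the image under $U$ of a unitary fusion category (in fact $U$ is literally surjective on objects, not merely up to equivalence). Next, essential surjectivity of $U$ on hom-groupoids is precisely Theorem~\ref{thm:functor}: given unitary fusion categories $\cC,\cD$ and a monoidal equivalence $\FF\colon\cC\to\cD$, that theorem supplies a unitary monoidal equivalence $\GG\colon\cC\to\cD$ together with a monoidal natural isomorphism $\FF\cong\GG$, so the isomorphism class of $\FF$ in $\EQ(\cC,\cD)$ lies in the image of $U$.

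Finally, for full faithfulness fix unitary monoidal equivalences $\FF,\GG\colon\cC\to\cD$; the map from unitary monoidal natural isomorphisms $\FF\To\GG$ to monoidal natural isomorphisms $\FF\To\GG$ is an inclusion, hence injective, and it is surjective by Theorem~\ref{thm:trafo}, which guarantees that any monoidal natural isomorphism between unitary monoidal equivalences is automatically unitary. Assembling these three observations yields the claimed equivalence.

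I do not expect a genuine obstacle here: the corollary is bookkeeping on top of Theorems~\ref{thm:functor} and~\ref{thm:trafo}. The only points deserving care are (a) interpreting ``equivalence'' in the statement in the appropriate weak ($2$-categorical) sense, since $U$ is visibly not an isomorphism of $2$-categories, and (b) being precise about the conventions for unitary monoidal equivalences and unitary monoidal natural isomorphisms, so that the comparison map in the full-faithfulness step is literally an inclusion of sets of $2$-cells; both are routine once the notation from~\cite{ENOHomotopy} is fixed.
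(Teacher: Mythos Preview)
Your proposal is correct and matches the paper's approach: the paper does not give a separate proof of this corollary but simply observes, in the sentence preceding it, that Theorems~\ref{thm:functor} and~\ref{thm:trafo} together establish the equivalence. Your write-up is a faithful unpacking of that observation into the standard criterion for an equivalence of $2$-groupoids (essential surjectivity on objects, essential surjectivity on $1$-morphisms, bijectivity on $2$-morphisms), with each step attributed to exactly the result the paper intends.
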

In a certain sense, the existence of a unitary structure on a fusion category $\cC$ may therefore be thought of as a property of $\cC$ (`unitarizability'), rather than as additional structure.

\begin{mainremark} Since the group of monoidal auto-equivalences of a multifusion category is finite~\cite[Thm 4.15]{ENOHomotopy}, Theorem~\ref{thm:functor} immediately generalizes to multifusion categories. However, Theorem~\ref{thm:trafo} does not hold for multifusion categories: There can be non-trivial positive (and hence non-unitary) monoidal natural transformations between unitary monoidal equivalences of unitary multifusion categories (see e.g.~\cite{PenneysUnitary}).
\end{mainremark}

\subsection*{Unitary braided fusion categories}
The constructions in the proof of Theorem~\ref{thm:functor} are compatible with braidings, allowing us to extend our results to unitary braided fusion categories.

\begin{maintheorem}\label{thm:braiding}Every braided monoidal equivalence $\cA \to \cB$ between unitary braided fusion categories is monoidally naturally isomorphic to a unitary braided monoidal equivalence.
\end{maintheorem}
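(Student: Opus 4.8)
The plan is to derive Theorem~\ref{thm:braiding} from Theorem~\ref{thm:functor} with essentially no additional work, by using two elementary observations: being braided is a \emph{property} of a monoidal functor, preserved under monoidal natural isomorphism; and unitarity of a braided monoidal functor between unitary braided fusion categories is a condition on the underlying monoidal functor alone (the braidings of source and target being unitary by hypothesis). So, given a braided monoidal equivalence $\FF\colon\cA\to\cB$, I would first forget the braidings and apply Theorem~\ref{thm:functor} to the underlying monoidal equivalence, obtaining a unitary monoidal equivalence $V\colon\cA\to\cB$ and a monoidal natural isomorphism $\varphi\colon\FF\To V$.

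The next step is to transport the braided structure of $\FF$ across $\varphi$. If $J$ and $J'$ denote the monoidal constraints of $\FF$ and $V$, then $\varphi$ being monoidal means $J'_{X,Y}=\varphi_{X\otimes Y}\circ J_{X,Y}\circ(\varphi_X\otimes\varphi_Y)^{-1}$, and a short diagram chase using naturality of $\varphi$, naturality of the braiding $\beta^{\cB}$ of $\cB$, and the fact that $\FF$ is braided, $\FF(\beta^{\cA}_{X,Y})\circ J_{X,Y}=J_{Y,X}\circ\beta^{\cB}_{\FF X,\FF Y}$, yields the analogous identity $V(\beta^{\cA}_{X,Y})\circ J'_{X,Y}=J'_{Y,X}\circ\beta^{\cB}_{VX,VY}$. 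Thus $V$ is automatically a braided monoidal equivalence $\cA\to\cB$, $\varphi$ is automatically a braided monoidal natural isomorphism, and, $V$ being unitary as a monoidal functor, it is a unitary braided monoidal equivalence. This completes the proof.

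I do not anticipate a genuine obstacle: all the real content sits in the categorified polar decomposition behind Theorem~\ref{thm:functor}, and the only thing left to verify is the routine compatibility in the second step. For uniformity with the rest of the paper, and as preparation for a braided analogue of Theorem~\ref{thm:trafo}, one may instead run the polar decomposition of Corollary~\ref{cor:polarsemisimple} directly and check that its construction is compatible with braidings, so that for braided $\FF$ the factorization $\FF\cong V\circ P$ has $P$ a braided positive monoidal auto-equivalence of $\cA$; Proposition~\ref{prop:functorstep2} then forces $P$ to be trivial, again giving $\FF\cong V$ with $V$ unitary braided. Along this route the single point needing care is that the various ingredients of the polar decomposition (the positive part of the canonical natural transformation, its square root, and so on) are built functorially enough to respect the braiding.
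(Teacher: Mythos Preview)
Your proposal is correct, and your main argument is in fact slightly cleaner than the paper's. The paper does not apply Theorem~\ref{thm:functor} as a black box; instead it re-opens the polar decomposition of Theorem~\ref{thm:functorstep1}, factors $\FF\cong (\id_\cB,p)\circ (F,u)$, and then uses Proposition~\ref{prop:trick} on the braided compatibility equation $f_{a',a}\,F(\sigma^\cA_{a,a'})=\sigma^\cB_{F(a),F(a')}\,f_{a,a'}$ to show that the positive part $(\id_\cB,p)$ is itself braided; it follows that $(F,u)$ is braided, and Proposition~\ref{prop:functorstep2} trivializes $(\id_\cB,p)$. This is precisely the alternative route you sketch in your final paragraph. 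Your primary route---transport the braiding along the monoidal natural isomorphism furnished by Theorem~\ref{thm:functor}---avoids reproving anything about the polar factors and uses only the standard fact that the property of being braided is invariant under monoidal natural isomorphism. The paper's approach buys a little extra: one sees directly that the positive part of the polar decomposition is braided, which keeps the braided and unbraided stories visibly parallel and would be the natural starting point for a braided refinement at the level of Corollary~\ref{cor:polarsemisimple}. But for the bare statement of Theorem~\ref{thm:braiding}, your shortcut is entirely adequate.
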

Theorem~\ref{thm:braiding} is proven in Section~\ref{sec:theorem:braiding}.

By a result of Galindo~\cite[Thm 3.2]{Galindo}, any braiding on a unitary fusion category is unitary. Combining this with Theorem~\ref{thm:braiding} shows that there is a unique unitary braided structure on any braided unitarizable fusion category\footnote{A \emph{braided unitarizable fusion category} is a braided fusion category whose underlying fusion category is unitarizable.}. 

\begin{maincor} Every braided unitarizable fusion category admits a unique unitary braided structure (up to unitary braided monoidal equivalence). 
\end{maincor}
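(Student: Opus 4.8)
The plan is to deduce the corollary by combining the two results cited immediately before it: Theorem~\ref{thm:braiding} on the one hand, and Galindo's theorem~\cite[Thm 3.2]{Galindo} on the other. Let $\cA$ be a braided unitarizable fusion category, meaning its underlying fusion category admits a unitary structure. By Theorem~\ref{thm:functor}, that unitary structure on the underlying fusion category is unique up to unitary monoidal equivalence; transporting the braiding along such a unitary structure, I first want to produce \emph{some} unitary braided fusion category braided monoidally equivalent to $\cA$. The key input here is Galindo's result: once we fix a unitary structure on the underlying fusion category, every braiding on it is automatically unitary, so the braiding on $\cA$ pulls back to a unitary braiding, witnessing that $\cA$ is braided monoidally equivalent to a unitary braided fusion category. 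This establishes existence.

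For uniqueness, suppose $\cA_1$ and $\cA_2$ are two unitary braided fusion categories together with a braided monoidal equivalence $\cA_1 \to \cA_2$ (this is the situation we are in when both are unitary braided structures on the same braided unitarizable fusion category, or more generally when they are braided monoidally equivalent as braided fusion categories). Then Theorem~\ref{thm:braiding} applies directly: the braided monoidal equivalence $\cA_1 \to \cA_2$ is monoidally naturally isomorphic to a \emph{unitary} braided monoidal equivalence $\cA_1 \to \cA_2$. Hence $\cA_1$ and $\cA_2$ are equivalent in the $2$-groupoid of unitary braided fusion categories, unitary braided monoidal equivalences, and unitary monoidal natural isomorphisms. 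Spelling this out gives exactly the asserted uniqueness up to unitary braided monoidal equivalence.

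I expect this corollary to be essentially immediate once Theorem~\ref{thm:braiding} is in hand, so there is no real obstacle; the only point requiring a little care is the bookkeeping in the existence half, namely correctly invoking Galindo's theorem to see that the transported braiding is genuinely unitary (and not merely a braiding compatible with the monoidal $*$-structure in a weaker sense). One should also note, as the paper does in the braided-fusion setting, that the relevant $2$-groupoid statement — an analogue of Corollary~\ref{cor:Eq} — could be phrased here, but for the corollary as stated it suffices to record existence and uniqueness up to unitary braided monoidal equivalence.
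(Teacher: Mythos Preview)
Your proposal is correct and follows essentially the same approach as the paper: the corollary is stated as an immediate consequence of combining Galindo's theorem (any braiding on a unitary fusion category is unitary) with Theorem~\ref{thm:braiding}, and your argument spells out precisely this combination, with existence coming from Galindo and uniqueness from Theorem~\ref{thm:braiding}. The only superfluous step is your invocation of Theorem~\ref{thm:functor} in the existence half---you do not need uniqueness of the underlying unitary structure there, only its existence (which is the hypothesis) together with Galindo applied to the transported braiding---but this does not affect correctness.
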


Together with Theorem~\ref{thm:trafo}, Theorem~\ref{thm:braiding} shows that the $2$-groupoid $\EQBR$ of braided unitarizable fusion categories, braided monoidal equivalences and monoidal natural isomorphisms is equivalent to the $2$-groupoid $\EQBR^\dagger$ of unitary braided fusion categories, unitary braided monoidal equivalences and unitary natural isomorphisms.
\begin{maincor}\label{cor:EqBr} The forgetful $2$-functor $\EQBR^\dagger \to \EQBR$ is an equivalence. 
\end{maincor}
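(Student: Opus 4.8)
The plan is to verify directly that the forgetful $2$-functor $\Phi\colon\EQBR^\dagger\to\EQBR$ is an equivalence of $2$-groupoids. Since both sides are $2$-groupoids, it suffices to check that $\Phi$ is essentially surjective on objects and that, for every pair of unitary braided fusion categories $\cA,\cB$, the induced functor $\Phi_{\cA,\cB}\colon\EQBR^\dagger(\cA,\cB)\to\EQBR(\cA,\cB)$ is an equivalence of categories. Each of these reduces to a result already established above.

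For essential surjectivity on objects I would start from a braided unitarizable fusion category $\cC$. By definition its underlying fusion category is unitarizable, so there is a unitary fusion category $\cD$ together with a monoidal equivalence $F\colon\cC\to\cD$. Transporting the braiding of $\cC$ through $F$ equips $\cD$ with a braiding for which $F$ becomes a braided monoidal equivalence, and by Galindo's theorem \cite[Thm 3.2]{Galindo} this braiding is automatically unitary; hence $\cD$, with the transported braiding, is a unitary braided fusion category with $\Phi(\cD)\simeq\cC$ in $\EQBR$.

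For the local statement I would treat essential surjectivity and full faithfulness of $\Phi_{\cA,\cB}$ separately. Essential surjectivity: an object of $\EQBR(\cA,\cB)$ is a braided monoidal equivalence $\cA\to\cB$, and Theorem~\ref{thm:braiding} says any such is monoidally naturally isomorphic to a unitary braided monoidal equivalence, i.e.\ to an object in the image of $\Phi_{\cA,\cB}$. Faithfulness is immediate, since the $2$-morphisms between a fixed pair of $1$-morphisms are monoidal natural isomorphisms on both sides (being braided imposes no extra condition on a natural transformation), with $\EQBR^\dagger$ merely selecting the unitary ones. Fullness is Theorem~\ref{thm:trafo}: if $F,G\colon\cA\to\cB$ are unitary braided monoidal equivalences and $\eta\colon F\To G$ is a monoidal natural isomorphism, then, regarding $F$ and $G$ as unitary monoidal equivalences, $\eta$ is unitary and hence is a $2$-morphism of $\EQBR^\dagger$.

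I do not expect a serious obstacle, as the corollary essentially repackages Theorems~\ref{thm:braiding} and~\ref{thm:trafo}; the one point deserving attention is the object-level statement, where one must confirm that the braiding transported onto a chosen unitary model of the underlying fusion category is again unitary — precisely the input supplied by Galindo's theorem — rather than taking it for granted.
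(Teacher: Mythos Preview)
Your proposal is correct and follows essentially the same approach as the paper: the corollary is deduced from Theorem~\ref{thm:braiding} (essential surjectivity on $1$-morphisms), Theorem~\ref{thm:trafo} (full faithfulness on $2$-morphisms), and Galindo's theorem for the object-level essential surjectivity, exactly as you outline. The paper states the corollary without a detailed proof, merely noting that it follows from Theorems~\ref{thm:trafo} and~\ref{thm:braiding} together with the preceding discussion invoking Galindo; your write-up is a faithful unpacking of that sketch.
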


\subsection*{Unitary module categories}
The proof of Theorem~\ref{thm:functor} translates almost directly into a proof of uniqueness of the unitary structure on a unitarizable module category.
\begin{maintheorem} \label{thm:modfunctor}Every module equivalence ${}_{\cC}\cM\to {}_{\cC} \cN$ between unitary module categories over unitary fusion categories $\cC$ and $\cD$ is naturally isomorphic, as a module functor, to a unitary module equivalence.
\end{maintheorem}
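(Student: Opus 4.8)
The plan is to mimic the polar-decomposition strategy that proves Theorem~\ref{thm:functor}, transported to the module-categorical setting. First I would recall that a unitary module category ${}_\cC\cM$ over a unitary fusion category $\cC$ is a semisimple $\Cs$-category $\cM$ equipped with a $\Cs$-module structure, i.e.\ a $*$-functor $\cC \times \cM \to \cM$ together with unitary associativity and unit constraints compatible with the $*$-structures. Given a module equivalence $\FF : {}_\cC\cM \to {}_\cC\cN$ (where for simplicity we first treat the case $\cC = \cD$; the general statement then follows by composing with the unitary monoidal equivalence furnished by Theorem~\ref{thm:functor} relating the $\cC$- and $\cD$-actions), the underlying functor $\FF : \cM \to \cN$ is an equivalence of semisimple $\Cs$-categories, so it admits a polar decomposition $\FF \cong U \circ P$ into a unitary functor $U$ followed by a positive auto-equivalence $P$ of $\cM$ — this is the linear (non-monoidal) shadow of Corollary~\ref{cor:polarsemisimple}, obtained by taking, for each simple $m$, the positive square root of the positive operator $\FF^*_{m}\FF_m$ on $\Hom$-spaces and assembling these into a natural transformation. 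The first substantive step is to check that the module structure $\FF_2 : \FF(c \lact m) \xrightarrow{\sim} c \lact \FF(m)$ transports through this decomposition: concretely, one shows that $P$ inherits a \emph{positive} module structure and $U$ a \emph{unitary} module structure, exactly as in the monoidal case, because the module constraint is itself (part of) the data of a $*$-preserving functor and polar decomposition is compatible with the relevant coherence diagrams (this is the module-category analogue of the argument behind Definition~\ref{def:positivemonoidal} and the proof that the two factors in the polar decomposition are respectively positive and unitary monoidal).

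The second step is to show that every \emph{positive module auto-equivalence} $P$ of a unitary module category ${}_\cC\cM$ is naturally isomorphic, as a module functor, to the identity. This is the analogue of Proposition~\ref{prop:functorstep2}, and it is here that I expect the main obstacle to lie. In the monoidal case the corresponding statement uses finiteness of the group of monoidal auto-equivalences of a fusion category~\cite[Thm 4.15]{ENOHomotopy}; for module categories one would want the analogous finiteness of $\pi_0$ of the relevant $2$-groupoid of module auto-equivalences, or more precisely that the positive part of this $2$-group is trivial. One route: a positive module auto-equivalence $P$ acts on the (finite) set of isomorphism classes of simple objects of $\cM$ by a permutation, and positivity forces this permutation to be trivial and the scaling factors to be positive reals; then the module-functoriality constraint, positivity, and the fact that $\cM$ has finitely many simples (so that the "total dimension" / Frobenius–Perron data of $\cM$ as a $\cC$-module is rigid) pin $P$ down to the identity up to a positive natural automorphism, which in turn must be trivial. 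Making the last clause precise is the delicate point, and I would either invoke a finiteness statement for module categories analogous to~\cite[Thm 4.15]{ENOHomotopy} or reduce to the monoidal case via the dual category $\cC^*_\cM$, for which $\cM$ becomes an invertible bimodule and positive auto-equivalences of $\cM$ correspond to positive monoidal auto-equivalences of $\cC^*_\cM$, which are trivial by Proposition~\ref{prop:functorstep2}.

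Combining the two steps: write $\FF \cong U \circ P$ with $U$ a unitary module equivalence and $P$ a positive module auto-equivalence, then invoke triviality of $P$ to get a module natural isomorphism $\FF \cong U$, proving the theorem in the case $\cC = \cD$. For general $\cC, \cD$, one first applies Theorem~\ref{thm:functor} to replace the implicit monoidal equivalence $\cC \simeq \cD$ underlying the module equivalence by a unitary one — transporting the $\cD$-action on $\cN$ to a unitary $\cC$-action — and then runs the argument above. Finally, the "uniqueness of unitary structure" corollary drops out formally: if ${}_\cC\cM$ admits two unitary structures, the identity functor is a module equivalence between them, hence (by the theorem) module-naturally isomorphic to a unitary one, which exhibits a unitary module equivalence between the two structures. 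The only genuinely new work beyond transcribing the proof of Theorem~\ref{thm:functor} is bookkeeping of the module constraint through the polar decomposition (routine) and the finiteness input needed to kill positive module auto-equivalences (the real obstacle).
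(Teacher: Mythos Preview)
Your two-step outline --- factor the module equivalence as a unitary part followed by a positive module auto-equivalence, then kill the positive part using a finiteness argument --- is exactly the paper's strategy. However, your execution of Step~1 is misdirected in a way that matters.

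You propose to polar decompose the \emph{underlying functor} $F:\cM\to\cN$ as $U\circ P$ with $P$ a ``positive auto-equivalence of $\cM$,'' built from square roots of operators $\FF_m^*\FF_m$ on Hom-spaces. There is no useful notion of a functor itself being positive, and that construction does not produce a factorization of functors. What the paper does instead (Proposition~\ref{prop:modfunctorstep1}) is: first use Proposition~\ref{prop:replacefunctor} to replace $F$ by a $*$-equivalence, and then polar decompose the \emph{coherence natural isomorphism} $f_{c,m}:F(c\lact m)\to c\lact F(m)$ as $f=p\circ u$ with $u$ unitary and $p$ positive. Because $F$ is essentially surjective, $p$ descends to a positive natural isomorphism $p_{c,n}:c\lact n\to c\lact n$ on $\cN$, and the positive factor is the module auto-equivalence $(\id_{\cN},p)$ of the \emph{target} --- identity underlying functor, positive module constraint. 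That is what ``positive module auto-equivalence'' means here. The nontrivial check is that $(\id_{\cN},p)$ and $(F,u)$ each satisfy the module-functor coherence condition; this follows by applying Proposition~\ref{prop:trick} to the coherence equation for $\FF$, exactly as in Theorem~\ref{thm:functorstep1}. Your phrase ``transporting the module constraint through the decomposition'' gestures at this, but the object being decomposed is $f$, not $F$.

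For Step~2 you correctly identify finiteness of the group of module auto-equivalences as the needed input. The paper (Proposition~\ref{prop:modfunctorstep2}) invokes it directly --- $\Aut_{\cC}(\cM)$ is the group of invertible objects of the multifusion category $\End_{\cC}(\cM)$, hence finite --- and then runs the same $n$-th-power-and-positive-root trick as in Proposition~\ref{prop:functorstep2}: some $\PP^n$ is trivialized by a natural isomorphism $\eta$, so $\PP^{2n}$ is trivialized by the positive $\eta^\dagger\eta$, and taking the $2n$-th positive root trivializes $\PP$. Your alternative route via the dual category $\cC^*_{\cM}$ would also work but is unnecessarily indirect. Finally, the case ``$\cC$ and $\cD$'' in the statement does not require the extra reduction via Theorem~\ref{thm:functor} that you sketch; the paper's propositions are already phrased for module categories over a single $\cC$.
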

Theorem~\ref{thm:modfunctor} is proven in Section~\ref{sec:theorem:modfunctor}. Its proof is completely analogous to the proof of Theorem~\ref{thm:functor}, replacing finiteness of the group of monoidal auto-equivalences with finiteness of the group of module auto-equivalences. Again, uniqueness of the unitary structure on a unitarizable module category\footnote{A \emph{unitarizable module category} over a unitary fusion category $\cC$ is a finitely semisimple module category over $\cC$ which admits a compatible positive dagger structure.} is an immediate corollary.
\begin{maincor} \label{cor:uniquemodule}Every unitarizable module category of a unitary fusion category admits a unique unitary structure (up to unitary module equivalence).
\end{maincor}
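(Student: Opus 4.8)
The plan is to derive Corollary~\ref{cor:uniquemodule} as a formal consequence of Theorem~\ref{thm:modfunctor}, in exactly the way that Theorem~\ref{thm:functor} yields uniqueness of the unitary structure on a unitarizable fusion category. Let $\cC$ be a unitary fusion category and let $\cM$ be a unitarizable $\cC$-module category, i.e.\ a finitely semisimple $\cC$-module category admitting a compatible positive dagger structure. Suppose ${}_{\cC}\cM_1$ and ${}_{\cC}\cM_2$ are two unitary $\cC$-module categories whose underlying $\cC$-module categories are equivalent to $\cM$. Since being equivalent to $\cM$ is transitive, there is a $\cC$-module equivalence $\Phi\colon {}_{\cC}\cM_1 \to {}_{\cC}\cM_2$ of the underlying module categories; in the special case ${}_{\cC}\cM_1$ and ${}_{\cC}\cM_2$ have literally the same underlying module category $\cM$ one may simply take $\Phi = \id_{\cM}$ with its identity module constraint.

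Now apply Theorem~\ref{thm:modfunctor} to the module equivalence $\Phi\colon {}_{\cC}\cM_1 \to {}_{\cC}\cM_2$ between unitary module categories: it is naturally isomorphic, as a module functor, to a unitary module equivalence $\FF\colon {}_{\cC}\cM_1 \to {}_{\cC}\cM_2$. The mere existence of $\FF$ shows that ${}_{\cC}\cM_1$ and ${}_{\cC}\cM_2$ are equivalent \emph{as unitary $\cC$-module categories}. Hence any two unitary structures on $\cM$ are related by a unitary module equivalence, which is the assertion of Corollary~\ref{cor:uniquemodule}.

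There is essentially no obstacle at the level of the corollary itself: once Theorem~\ref{thm:modfunctor} is in hand, the only thing to check is the bookkeeping that a ``unitary structure on a unitarizable module category'' is exactly the data of a unitary $\cC$-module category with prescribed underlying $\cC$-module category, so that $\Phi$ genuinely falls under the hypotheses of Theorem~\ref{thm:modfunctor}; this is immediate from the definitions. The real work is in Theorem~\ref{thm:modfunctor}, whose proof (as the paper indicates) parallels that of Theorem~\ref{thm:functor}: one establishes a polar decomposition for module equivalences between semisimple $\Cs$-module categories, factoring any such equivalence as a unitary module equivalence followed by a ``positive'' module auto-equivalence, and then uses finiteness of the group of module auto-equivalences of ${}_{\cC}\cM$ to conclude that the positive part is trivial. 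Accordingly, the hard part is not the corollary but the categorified polar decomposition underlying it, which is carried out in the body of the paper.
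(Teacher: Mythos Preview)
Your proposal is correct and matches the paper's approach exactly: the paper states this corollary as an immediate consequence of Theorem~\ref{thm:modfunctor} without giving a separate proof, and you have simply spelled out the trivial deduction (two unitary structures on $\cM$ yield a module equivalence between unitary module categories, to which Theorem~\ref{thm:modfunctor} applies). Your summary of the content of Theorem~\ref{thm:modfunctor}'s proof is also accurate, though of course that work belongs to the theorem and not to the corollary.
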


\begin{mainremark}\label{rem:positivemodule}
The category of $*$-module functors between two given unitary module categories is a finitely semisimple $\Cs$-category. Hence, any module natural isomorphism $\eta: \FF \To \GG$ between unitary module equivalences may be factored into a unitary module natural isomorphism followed by a positive module natural isomorphism. However, there is no analogous statement to Theorem~\ref{thm:trafo}; there are non-trivial positive module natural isomorphisms.
\end{mainremark}
\begin{mainremark}\label{rem:invertiblebimod}
Similar to Corollaries~\ref{cor:Eq} and~\ref{cor:EqBr}, it would be interesting to compare the algebraic Brauer-Picard $2$-groupoid\footnote{By Remark~\ref{rem:positivemodule}, there are non-unitary module natural isomorphisms. Hence, we may only hope for an equivalence between the $2$-truncations $\BrPic$ and $\BrPic^\dagger$ of the algebraic and unitary Brauer-Picard $3$-groupoids $\underline{\BrPic}$ and $\underline{\BrPic}^\dagger$.} $\BrPic$ of unitarizable fusion categories, invertible bimodule categories, and natural isomorphism classes of bimodule equivalences with its unitary counterpart $\BrPic^\dagger$ and prove that the forgetful $2$-functor $\BrPic^\dagger \to \BrPic$ is an equivalence. In fact, by Corollary~\ref{cor:EqBr} and the fact that the unitary Drinfeld center $\cZ^\dagger(\cC)$ of a unitary fusion category \emph{equals}~\cite[Prop 3.1]{Galindo} its ordinary Drinfeld center $\cZ(\cC)$, this would be a direct consequence of the widely expected (but to our knowledge still unproven) unitary version of the equivalence $\BrPic(\cC) \to \EQBR(\cZ(\cC))$ (see~\cite[Thm 1.1]{ENOHomotopy}).
\end{mainremark}

Remark~\ref{rem:invertiblebimod} provides very strong evidence that every \emph{invertible} finitely semisimple bimodule category between unitary fusion categories is unitarizable and hence admits a unique unitary structure. Indeed, note that Theorems~\ref{thm:functor},~\ref{thm:trafo},~\ref{thm:braiding} and~\ref{thm:modfunctor} may all be understood as asserting `unitarizability' of various \emph{invertible} morphisms. For non-invertible morphisms --- such as non-invertible bimodule categories --- the situation is far less clear.
\begin{mainquestion} Does every finitely semisimple module category over a unitary fusion category admit a (by Corollary~\ref{cor:uniquemodule} necessarily unique) unitary structure?
\end{mainquestion}

\subsection*{A cohomological perspective}
Our results can be understood as a generalization of the fact that for a finite group $G$, the homomorphism $H^n(G, U(1)) \to H^n(G, \CC^\times)$ induced from the inclusion $U(1) \hookrightarrow \CC^\times$ is an isomorphism. Indeed, our proof mirrors the following elementary proof of this fact: Let $\omega(g_1,\ldots, g_n)$ denote a $\CC^\times$-valued n-cocycle. Taking absolute values of the $n$-cocycle equation shows that $|\omega(g_1,\ldots, g_n)|$ and $u(g_1,\ldots, g_n):= \omega(g_1,\ldots, g_n)/|\omega(g_1,\ldots, g_n)|$ are $n$-cocycles in $H^n(G,\RR_{>0})$ and $H^n(G,U(1))$, respectively, and that $\omega(g_1,\ldots, g_n) = u(g_1,\ldots, g_n) |\omega(g_1,\ldots, g_n)|$. Moreover, it follows from finiteness of $G$ that $H^n(G, \RR_{>0})$ is zero, and hence that the positive $n$-cocycle $|\omega(g_1,\ldots,g_n)|$ is cohomologous to the trivial cocycle. Therefore, $\omega$ is cohomologous to $u$. 

Our proofs of Theorems~\ref{thm:functor} and~\ref{thm:trafo} proceed analogously, replacing the factorization $\omega= u~ |\omega|$ by a polar decomposition and deducing triviality of the positive part from the finiteness of the group of monoidal auto-equivalences and the universal grading group, respectively. 

More precisely, given a unitary fusion category $\cC$, Theorem~\ref{thm:functor} is a direct consequence of the vanishing of the group $h^2(\cC, \RR_{>0})$ of (natural isomorphism classes of) monoidal auto-equivalences of $\cC$ with underlying identity functor and positive coherence natural isomorphism (see Proposition~\ref{prop:functorstep2}). Similarly, Theorem~\ref{thm:trafo} is a direct consequence of the vanishing of the group $h^1(\cC, \RR_{>0})$ of positive monoidal natural automorphisms of the identity monoidal equivalence $\id_{\cC}: \cC \to \cC$ (see Proposition~\ref{prop:trafostep2}). The appearance of these groups is no coincidence: If $\cC$ is the category of $G$-graded vector spaces, then $h^1(\cC, \RR_{>0})$ and $h^2(\cC, \RR_{>0})$ are precisely the group cohomology groups $H^1(G, \RR_{>0})$ and $H^2(G, \RR_{>0})$. In general, there is no obvious cohomology theory $h^n$ of fusion categories specializing to $h^1$ and $h^2$ at $n=1,2$, but the prominent appearance of these groups nevertheless raises the question of whether our proofs are elementary versions of a more elegant cohomological result.

\subsection*{Generalizing to semisimple $\Cs$-categories and beyond}
In Corollary~\ref{cor:polarsemisimple}, we prove a polar decomposition theorem for semisimple $\Cs$-tensor categories (with possibly infinitely many simple objects): Every monoidal equivalence between semisimple $\Cs$-tensor categories is monoidally naturally isomorphic to the composite of a unitary monoidal equivalence followed by a positive monoidal auto-equivalence. In particular, any semisimple $\Cs$-tensor category admits a unique $\Cs$-structure (even though there might be non-trivial positive monoidal auto-equivalences, see Remark~\ref{rem:positiveauto}). In fact, we expect Corollary~\ref{cor:polarsemisimple} to be more generally true for arbitrary $\Ws$-tensor categories. By Theorem~\ref{thm:functorstep1}, this would follow from a positive answer to the following question:
\begin{mainquestion}\label{q:Wstar}Is every equivalence $F:\cC \to \cD$ between $\Ws$-categories naturally isomorphic to a $*$-equivalence?
\end{mainquestion}
\nid
Note that a positive answer to this question may be understood as a generalization of a theorem of Okayasu~\cite{Okayasu} that any (not necessarily $*$-preserving) isomorphism $A\to B$ of $\Ws$-algebras is of the form $\eta \phi(-) \eta^{-1}:A \to B$, where $\phi:A\to B$ is a $*$-isomorphism and $\eta$ is a positive invertible element of $B$.

\subsection*{Monoidal 2-categories}

Most of the material in this paper was developed using the framework and graphical calculus of the monoidal $2$-category $\tHilb$~\cite{Baez}. However, to keep our presentation as elementary and accessible as possible, we present all proofs in terms of standard $1$-categorical machinery, completely omitting the use of higher category theory.

For the interested reader, we demonstrate in Section~\ref{sec:2Hilb} how some of the more subtle applications of naturality in the proof of Theorem~\ref{thm:functorstep1} arise from simple isotopies in the graphical calculus. Section~\ref{sec:2Hilb} is purely expositional and is not necessary for the technical developments of our results.

\subsection*{Acknowledgements} I am grateful to Andr\'e Henriques and David Penneys for many helpful comments and suggestions on an early version of this manuscript, and to Makoto Yamashita for pointing out that the proof of Theorem~\ref{thm:functorstep1} also applies to semisimple $\Cs$-categories with infinitely many simple objects.

\section{Preliminaries}\label{sec:prelim}

\subsection{$*$-categories}\label{sec:lineardagger}
In the following, a \emph{linear category} is a category enriched in the category $\Vect_{\CC}$ of $\CC$-vector spaces and linear maps, and a \emph{linear functor} is a $\Vect_{\CC}$-enriched functor.

A \emph{$*$-category} is a linear category equipped with a \emph{$*$-structure}, a $\CC$-antilinear, involutive, identity-on-objects functor $(-)^*: \cC^\op \to \cC$. A morphism $u:A \to B$ in a $*$-category is \emph{unitary} if $u^* u = \id_A$ and $u u^* = \id_B$. An endomorphism $p:A\to A$ is \emph{positive} if there is some morphism $g:A\to A$ such that $p= g^* g$. A \emph{$*$-functor} $F:\cC \to \cD$ between $*$-categories is a linear functor $F$ such that $F(f^*) = F(f)^*$ for all morphisms $f$. The category \ignore{$\mathrm{Func}^\dagger(\cC, \cD)$ }of $*$-functors and natural transformations between two $*$-categories is itself a $*$-category: For a natural transformation $\eta:F \to G$, we define $\eta^*: G\to F$ to be the natural transformation with components $(\eta^*)_A:= \eta_A^*$. In particular, this gives rise to notions of \emph{positive} and \emph{unitary natural transformations}. A \emph{$*$-equivalence} $F:\cC \to \cD$ between $*$-categories is a $*$-functor $F$ such that there exists a $*$-functor $G:\cD \to \cC$ for which $F\circ G$ and $G\circ F$ are unitarily naturally isomorphic to the respective identity functors. 

A \emph{$\Cs$-category}~\cite{WStar} is a $*$-category fulfilling the following two additional properties:
\begin{itemize}
\item[a)] For every morphism $f:a\to b$, there exists a morphism $g:a\to a$ such that $f^*f = g^*g$;
\item[b)] The following expression defines a complete norm on the Hom-spaces
\[\|f\|=\sup \{|\lambda|~|~\lambda \in \mathbb{C}, \text{ s.t.} f^*f-\lambda\cdot \id \text{ is not invertible}\}
\] such that $\|fg\| \leq \|f\| \|g\|$ and $\|f^*f\| = \|f\|^2$.
\end{itemize}A \emph{$\Ws$-category}~\cite{WStar} is a $\Cs$-category with the additional property that every Banach space $\Hom(a,b)$ has a predual. 

We emphasize that being a $\Cs$-category, or a $\Ws$-category, is a property of a $*$-category, and not additional structure. 
The standard example of a $\Ws$-category is the category $\Hilb$ of finite-dimensional Hilbert spaces and linear maps. A natural transformation $\eta:F\To G$ between $*$-functors $F,G:\cC\to \cD$ between $\Cs$-categories is \emph{bounded} if $\sup_{c \in \mathrm{ob} \cC}\|\eta_c\| < \infty$. The category of $*$-functors and bounded natural transformations between two $\Cs$-categories again forms a $\Cs$-category. 

An object $x$ in a $\Cs$-category is \emph{simple} if $\End(x)\iso \mathbb{C}$. A $\Cs$-category is \emph{semisimple} if it has finite direct sums and every object is a finite direct sum of simple objects. In particular, a semisimple $\Cs$-category has finite-dimensional morphism spaces (and is hence a $\Ws$-category). 
Equivalently, a semisimple $\Cs$-category is a $*$-category that is equivalent, as a $*$-category, to a direct sum of copies of $\Hilb$.

\subsection{$\Cs$-tensor categories, $\Ws$-tensor categories and unitary fusion categories}

A \emph{$*$-monoidal category} is a linear monoidal category equipped with a $*$-structure such that all monoidal coherence isomorphisms are unitary and such that $(f\otimes g)^*= f^*\otimes g^*$ for all morphisms $f,g$. The latter condition may equivalently be expressed as stating that the tensor product functor $- \otimes -: \cC \times \cC \to \cC$ is a $*$-functor. A $\Cs$-tensor category (or a $\Ws$-tensor category) is a $*$-monoidal category whose underlying $*$-category is a $\Cs$-category (or a $\Ws$-category).

A monoidal category is \emph{rigid} if every object has a right and a left dual. A \emph{unitary multifusion category} is a rigid semisimple $\Cs$-tensor category with a finite number of simple objects. A \emph{unitary fusion category} is a unitary multifusion category with simple monoidal unit.

Recall~\cite[Def 2.4.1]{EGNO} that a \emph{monoidal functor} $\FF:\cC\to \cD$ between monoidal categories may be defined as a pair $(F,f)$ of a functor $F:\cC \to \cD$ with the property that $F(I_{\cC})$ is isomorphic to $I_{\cD}$, together with a natural isomorphism $f_{c,c'}: F(c\otimes c') \to F(c) \otimes F(c')$ fulfilling the usual coherence condition. In the following, we always denote a monoidal functor by a blackboard-bold letter $\FF$ and its underlying functor $F$ and natural isomorphism $f$ by the corresponding upper and lower case letter. A \emph{monoidal equivalence} is a monoidal functor $\FF=(F,f): \cC \to \cD$ for which $F:\cC \to \cD$ is an equivalence of categories.

A \emph{monoidal natural isomorphism} $\eta: \FF \To \GG$ between monoidal equivalences is a natural isomorphism $\eta: F \To G$ fulfilling $g_{c,c'} \eta_{c\otimes c'} = (\eta_c\otimes \eta_{c'}) f_{c,c'}$.

A \emph{unitary monoidal equivalence} between $*$-monoidal categories is a monoidal equivalence $\FF=(F,f)$ whose underlying functor $F$ is a $*$-equivalence for which $F(I_{\cC})$ is unitarily natural isomorphic to $I_{\cD}$ and whose underlying natural isomorphism $f$ is unitary.

A \emph{unitary braided fusion category} is a unitary fusion category equipped with a unitary braiding. A \emph{unitary braided monoidal equivalence} between unitary braided fusion categories is a unitary monoidal equivalence $\FF=(F,f):\cA \to \cB$ such that $ f_{a',a}F(\sigma^\cA_{a,a'})=\sigma^\cB_{F(a), F(a')}f_{a,a'}$, where $\sigma^\cA_{a,a'}:a\otimes a' \to a'\otimes a$ and $\sigma^\cB_{b,b'}:b \otimes b' \to b' \otimes b$ denote the braidings of $\cA$ and $\cB$, respectively.

\subsection{Unitary module categories}
A \emph{module category} ${}_{\cC}\cM = (\cM, {-\lact-},\mu^\cM)$ over a monoidal category $\cC$ is a category $\cM$ equipped with a functor $-\lact -: \cC \times \cM \to \cM$ with the property that ${I_{\cC}\lact-}:\cM \to \cM$ is an autoequivalence, and a natural isomorphism $\mu^\cM_{c,c', m}: (c\otimes c') \lact m \to c\lact (c' \lact m)$ fulfilling the usual coherence condition~\cite[Def 7.1.1]{EGNO}.
A \emph{module functor} $\FF =(F,f): {}_{\cC}\cM \to {}_{\cC}\cN$ between module categories is a pair of a functor $F: \cM \to \cN$ and a natural isomorphism $f_{c,m} : F(c\lact m) \to c \lact F(m)$
fulfilling the usual coherence conditions~\cite[Def 7.2.1]{EGNO}. A \emph{module equivalence} is a module functor $\FF=(F,f):{}_{\cC}\cM \to {}_{\cC} \cN$ for which $F:\cM \to \cN$ is an equivalence of categories. 

A \emph{unitary module category} ${}_{\cC}\cM =(\cM, {-\lact -}, \mu^\cM)$ over a unitary fusion category $\cC$ is a module category for which $\cM$ is a semisimple $\Cs$-category with a finite number of simple objects, for which $-\lact -: \cC \times \cM \to \cM$ is a $*$-functor with the property that ${I_{\cC} \lact -}: \cM \to \cM$ is a $*$-equivalence\footnote{By Proposition~\ref{prop:replacefunctor}, this condition is automatically satisfied.}, and for which the natural isomorphism $\mu^\cM_{c,c',m}$ is unitary. 
A \emph{unitary module equivalence} is a module equivalence $\FF=(F,f):{}_{\cC}\cM \to {}_{\cC}\cN$ between unitary module categories for which $f$ is unitary and $F$ is a $*$-equivalence.

\subsection{Polar decomposition}
Every $\Ws$-category admits a notion of \emph{polar decomposition} \cite[Cor 2.7]{WStar}. Here, we will only make use of the following special case: Every positive morphism $p:A\to A$ in a $\Ws$-category has a unique positive square root; that is, there exists a unique positive morphism $\sqrt{p}:A\to A$ such that $\sqrt{p}^2 = p$. 

This immediately gives rise to the following \emph{polar decomposition} of invertible morphisms in a $\Ws$-category:  If $f:a \to b$ is an invertible morphism, then $f$ is the composite of the unitary morphism $u = \sqrt{(f f^*)}^{-1}f : a\to b$ followed by the positive morphism $p:= \sqrt{f f^*}: b\to b$. We will make frequent use of the following direct consequence of polar decomposition.\looseness=-2
\begin{prop}\label{prop:trick} Let $\cC$ be a $\Ws$-category. Let $v$ and $w$ be unitary morphisms and $x$ and $y$ be morphisms in $\cC$ fulfilling $xv = w y$ (where the types of $v,w, x,y$ are such that this equation makes sense). Then, $|x| w =w |y| $, where $|x|$ and $|y|$ are the unique positive square roots of $x x^*$ and $ yy^*$.
\end{prop}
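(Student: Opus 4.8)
The plan is to reduce the statement to the uniqueness of positive square roots recalled in the Polar decomposition subsection. First I would pin down the types. Up to relabelling we may write $v\colon a\to b$, $x\colon b\to c$, $y\colon a\to d$ and $w\colon d\to c$, so that $xv$ and $wy$ are both morphisms $a\to c$; then $xx^*\colon c\to c$ and $yy^*\colon d\to d$ are positive endomorphisms, $|x|=\sqrt{xx^*}\colon c\to c$, $|y|=\sqrt{yy^*}\colon d\to d$, and the claimed identity $|x|\,w = w\,|y|$ is an equation of morphisms $d\to c$, which type-checks. This bookkeeping is the only point that needs a little care.

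Next, I would apply $(-)^*$ to the hypothesis $xv=wy$ to get $v^*x^* = y^*w^*$, and then compute, using that $v$ and $w$ are unitary (so $vv^*=\id_b$ and $w^*w=\id_d$),
\[
xx^* \;=\; x\,vv^*\,x^* \;=\; (xv)(xv)^* \;=\; (wy)(wy)^* \;=\; w\,yy^*\,w^* \;=\; w\,(yy^*)\,w^*.
\]

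The key observation is then that $w\sqrt{yy^*}\,w^*$ is a positive morphism $c\to c$: since $\sqrt{yy^*}$ is self-adjoint, $w\sqrt{yy^*}\,w^* = \bigl(\sqrt{yy^*}\,w^*\bigr)^*\bigl(\sqrt{yy^*}\,w^*\bigr)$, so conjugation of a positive morphism by a unitary is positive. Moreover
\[
\bigl(w\sqrt{yy^*}\,w^*\bigr)^2 \;=\; w\,\bigl(\sqrt{yy^*}\bigr)^2\,w^* \;=\; w\,(yy^*)\,w^* \;=\; xx^*.
\]
By uniqueness of the positive square root of the positive morphism $xx^*$ in the $\Ws$-category $\cC$, this forces $|x| = \sqrt{xx^*} = w\sqrt{yy^*}\,w^* = w\,|y|\,w^*$. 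Multiplying on the right by the unitary $w$ and using $w^*w=\id_d$ yields $|x|\,w = w\,|y|$, as desired.

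I do not expect any real obstacle here: the argument uses only that $(-)^*$ is an anti-multiplicative involutive identity-on-objects functor, the unitarity relations $vv^*=\id$ and $w^*w=\id$, the fact that conjugation of a positive morphism by a unitary is again positive, and the uniqueness of positive square roots. The only thing to watch is that all the composites appearing live in the correct $\Hom$-spaces, which the type assignment above makes transparent.
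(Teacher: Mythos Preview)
Your approach is essentially identical to the paper's: both derive $xx^* = w\,(yy^*)\,w^*$ and then invoke uniqueness of positive square roots to conclude $|x| = w\,|y|\,w^*$. There is, however, a slip in your justification that $w\sqrt{yy^*}\,w^*$ is positive: the displayed identity $w\sqrt{yy^*}\,w^* = (\sqrt{yy^*}\,w^*)^*(\sqrt{yy^*}\,w^*)$ is false, since the right-hand side equals $w\,(\sqrt{yy^*})^2\,w^* = w\,(yy^*)\,w^*$, not $w\sqrt{yy^*}\,w^*$. The intended conclusion is nonetheless correct: since $\sqrt{yy^*}$ is positive, write $\sqrt{yy^*} = h^*h$ for some $h$ (for instance $h=\sqrt[4]{yy^*}$), and then $w\sqrt{yy^*}\,w^* = (hw^*)^*(hw^*)$ exhibits it as positive. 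With this fix your argument is complete and matches the paper's.
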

\begin{proof}Taking the adjoint of $xv = wy$ and using unitarity of $v$ and $w$, it follows that $x^* w = v y^*$ and in particular that $xx^* w = w y y^*$ or equivalently $|x|^2 = w |y|^2 w^*$. By uniqueness of the positive square root of positive morphisms, it follows that $|x| = w|y| w^*$.
\end{proof}

Even though the $*$-category of $*$-functors and (possibly unbounded) natural transformations between two $\Ws$-categories is not a $\Ws$-category, we can nevertheless polar decompose unbounded natural transformations.

\begin{prop}\label{prop:naturaltrafopolar} A natural endomorphism $\eta:F\To F$ of a $*$-functor between $\Cs$-categories is positive if and only if it is componentwise positive. Such a positive natural transformation admits a unique positive square root with components $(\sqrt{\eta})_c = \sqrt{\eta_c}$.
\end{prop}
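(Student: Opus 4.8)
The plan is to prove the two implications separately and then read off uniqueness from the componentwise characterization. The ``only if'' direction is immediate: if $\eta = g^{*}g$ for a natural transformation $g \colon F \To F$, then, by the definition of the $*$-structure on the $*$-category of $*$-functors and natural transformations, $\eta_{c} = (g^{*}g)_{c} = g_{c}^{*}g_{c}$ for every object $c$, which is a positive endomorphism of $F(c)$ in $\cD$.

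For the converse, assume every $\eta_{c}$ is positive. Each endomorphism algebra $\End(F(c))$ is a unital $\Cs$-algebra, so $\eta_{c}$ has a unique positive square root $\sqrt{\eta_{c}}$ via continuous functional calculus. I would set $(\sqrt{\eta})_{c} := \sqrt{\eta_{c}}$ and then verify that this family is natural. Given a morphism $f\colon c \to c'$, naturality of $\eta$ gives $F(f)\,\eta_{c} = \eta_{c'}\,F(f)$, and a straightforward induction promotes this to $F(f)\,\eta_{c}^{\,k} = \eta_{c'}^{\,k}\,F(f)$ for all $k \ge 0$, hence to $F(f)\,p(\eta_{c}) = p(\eta_{c'})\,F(f)$ for every polynomial $p$. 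Fixing $R > \max(\|\eta_{c}\|,\|\eta_{c'}\|)$ and choosing polynomials $p_{n}$ with $p_{n} \to \sqrt{t}$ uniformly on $[0,R]$, continuous functional calculus gives $p_{n}(\eta_{c}) \to \sqrt{\eta_{c}}$ and $p_{n}(\eta_{c'}) \to \sqrt{\eta_{c'}}$ in norm; since pre- and post-composition with $F(f)$ are norm-continuous on the Hom-spaces of the $\Cs$-category $\cD$, letting $n \to \infty$ in $F(f)\,p_{n}(\eta_{c}) = p_{n}(\eta_{c'})\,F(f)$ yields $F(f)\,\sqrt{\eta_{c}} = \sqrt{\eta_{c'}}\,F(f)$. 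Thus $g := \sqrt{\eta}$ is a bona fide natural transformation $F \To F$; each component is self-adjoint, so the adjoint natural transformation $g^{*}$ has the same components and $(g^{*}g)_{c} = \sqrt{\eta_{c}}^{\,2} = \eta_{c}$, i.e.\ $\eta = g^{*}g$ is positive, with positive square root $g$.

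For uniqueness, if $\rho \colon F \To F$ is a positive natural transformation with $\rho^{2} = \eta$, then by the ``only if'' direction each $\rho_{c}$ is a positive element of $\End(F(c))$ with $\rho_{c}^{2} = \eta_{c}$, so $\rho_{c} = \sqrt{\eta_{c}}$ by uniqueness of positive square roots in a $\Cs$-algebra; hence $\rho = \sqrt{\eta}$.

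I expect the only genuine obstacle to be the naturality check, together with the worry that the possibly unbounded $\eta$ might block a uniform functional-calculus argument. The resolution to emphasize is that naturality is tested one morphism at a time and so only ever involves the two objects $c, c'$ at its source and target; a single finite bound $R$ on $\|\eta_{c}\|$ and $\|\eta_{c'}\|$ therefore always suffices, and the global unboundedness of $\eta$ (and of $\sqrt{\eta}$) never enters. The remaining ingredients are standard: continuous functional calculus in a unital $\Cs$-algebra, and continuity of composition in a $\Cs$-category.
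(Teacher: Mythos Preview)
Your proof is correct and follows the same overall strategy as the paper: handle the ``only if'' direction componentwise, and for the converse take the componentwise positive square roots $\sqrt{\eta_c}$ and verify that they assemble into a natural transformation. The only difference is in that naturality check: the paper invokes the linking algebras of~\cite{WStar} to reduce the implication ``$q\alpha=\alpha p \Rightarrow \sqrt{q}\,\alpha=\alpha\,\sqrt{p}$ for positive $p,q$'' to the analogous $\Cs$-algebra statement, whereas you give a direct polynomial-approximation argument---which is more self-contained and, as you note, makes explicit why the possible global unboundedness of $\eta$ is irrelevant (each naturality square involves only two components).
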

\begin{proof}By definition, a natural transformation $\eta:F \To F$ is positive if there is a natural transformation $\epsilon: F\To F$ such that $\eta= \epsilon^* \epsilon$. In particular, every positive natural transformation is componentwise so. Conversely, suppose that $\eta:F\To F$ is componentwise positive. Then, for every object $c$ of $\cC$, the morphism $\eta_c:F(c) \to F(c)$ admits a unique positive square root $\epsilon_c:= \sqrt{\eta_c}:F(c) \to F(c)$. Note that given a morphism $\alpha:a\to b$ and positive morphisms $p:a\to a$ and $q:b\to b$ in a $\Cs$-category such that $q\alpha  = \alpha p$, then $\sqrt{q} \alpha = \alpha \sqrt{p}$. Using the linking algebras $M(a,b)$ developed in~\cite{WStar}, this follows for example from the analogous statement for $\Cs$-algebras. In particular, the morphisms $\epsilon_c:F(c) \to F(c)$ assemble into a natural transformation. 
\end{proof}

\begin{corollary}\label{cor:polarnatural}
Every (possibly unbounded) natural isomorphism $\eta:F\To G$ of $*$-functors between $\Ws$-categories admits a polar decomposition into a unitary (and hence bounded) natural transformation $u=\sqrt{\eta\eta^*}^{-1} \eta: F\To G$ followed by a positive (and possibly unbounded) natural transformation $p=\sqrt{\eta\eta^*}:G\To G$. 
\end{corollary}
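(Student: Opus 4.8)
The plan is to obtain the decomposition componentwise from the polar decomposition in the target $\Ws$-category and to promote it to a decomposition of natural transformations using Proposition~\ref{prop:naturaltrafopolar}.

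First I would consider the composite natural transformation $\eta\eta^*:G\To G$, where $\eta^*:G\To F$ is the natural transformation with components $\eta_c^*$. Its component at an object $c$ is $\eta_c\eta_c^*=(\eta_c^*)^*\eta_c^*$, which is a positive morphism of the target category. Hence $\eta\eta^*$ is componentwise positive, so by Proposition~\ref{prop:naturaltrafopolar} it is a positive natural endomorphism of $G$ and admits a unique positive square root $p:=\sqrt{\eta\eta^*}:G\To G$ with components $p_c=\sqrt{\eta_c\eta_c^*}$.

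Next I would check invertibility of $p$. Since $\eta$ is a natural \emph{isomorphism}, each $\eta_c$ is invertible, so each $\eta_c\eta_c^*$ is an invertible positive morphism, and therefore so is its positive square root $p_c$ (its inverse being the positive square root of $(\eta_c\eta_c^*)^{-1}$). The componentwise inverses $p_c^{-1}$ assemble into a natural transformation $p^{-1}:G\To G$ --- the inverse of a natural isomorphism is automatically natural --- which may a priori be unbounded. Setting $u:=p^{-1}\eta:F\To G$, the component $u_c=\sqrt{\eta_c\eta_c^*}^{-1}\eta_c$ is exactly the unitary part of the polar decomposition of the invertible morphism $\eta_c$ recalled above; hence each $u_c$ is unitary, so $u$ is a unitary --- in particular bounded, with $\|u_c\|=1$ --- natural transformation. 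Finally $pu=p\,p^{-1}\eta=\eta$, which is the asserted factorization of $\eta$ as $u$ followed by $p$.

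The argument is essentially a formal consequence of Proposition~\ref{prop:naturaltrafopolar}; the only points requiring (minor) attention are that the positive square root assembles into a natural transformation --- which is precisely the content of Proposition~\ref{prop:naturaltrafopolar}, via the observation that $\sqrt{q}\,\alpha=\alpha\sqrt{p}$ whenever $q\alpha=\alpha p$ --- and that the inverse of a natural isomorphism is again natural. I therefore expect no substantive obstacle.
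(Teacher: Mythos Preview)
Your proof is correct and follows essentially the same approach as the paper's: both obtain $p=\sqrt{\eta\eta^*}$ via Proposition~\ref{prop:naturaltrafopolar} and then verify that $u=p^{-1}\eta$ is unitary. The only cosmetic difference is that the paper observes $\eta\eta^*=(\eta^*)^*\eta^*$ is positive \emph{by definition} in the $*$-category of $*$-functors, whereas you deduce positivity componentwise and then invoke Proposition~\ref{prop:naturaltrafopolar}; your version simply spells out the ``direct computation'' the paper leaves implicit.
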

\begin{proof}By definition, $\eta\eta^*$ is positive  and invertible and hence, following Proposition~\ref{prop:naturaltrafopolar}, admits a unique positive invertible square root $p=\sqrt{\eta \eta^*}$. A direct computation shows that $u=\sqrt{\eta\eta^*}^{-1} \eta$ is unitary.
\end{proof}

\section{On the uniqueness of unitary structure}\label{sec:main}
\subsection{Categorified polar decomposition}\label{sec:theorem:functor}
In this section, we prove a polar decomposition theorem for monoidal equivalence between $\Ws$-categories.

\begin{definition}\label{def:positivemonoidal}
Let $\cD$ be a $\Cs$-tensor category. We say that a monoidal auto-equivalence $\cD \to \cD$ is \emph{positive} if it is monoidally naturally isomorphic to a monoidal auto-equivalence $(\id_{\cD}, p)$ whose underlying functor is the identity functor and whose coherence natural isomorphism $p_{d,d'}: d\otimes d' \To d\otimes d'$ is positive. 
\end{definition}
\begin{theorem}\label{thm:functorstep1} Let $\FF=(F,f):\cC\to \cD$ be a monoidal equivalence between $\Ws$-categories whose underlying functor $F$ is naturally isomorphic to a $*$-equivalence. Then, $\FF$ is monoidally naturally isomorphic to the composite of a unitary monoidal equivalence $\cC \to \cD$ followed by a positive monoidal auto-equivalence $\cD \to \cD$.
\end{theorem}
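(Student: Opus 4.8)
The plan is to reduce to the case that $F$ is itself a $*$-equivalence, polar-decompose the coherence natural isomorphism, and read off the two factors of the decomposition. For the reduction, if $\theta\colon F\To F'$ is a natural isomorphism to a $*$-equivalence $F'$, then $(F',f')$ with $f'_{c,c'}:=(\theta_c\otimes\theta_{c'})\circ f_{c,c'}\circ\theta_{c\otimes c'}^{-1}$ is a monoidal functor and $\theta$ a monoidal natural isomorphism $\FF\To(F',f')$, so we may assume $F$ is a $*$-equivalence. Viewing $f$ as a natural isomorphism between the $*$-functors $(c,c')\mapsto F(c\otimes c')$ and $(c,c')\mapsto F(c)\otimes F(c')$ from $\cC\times\cC$ to $\cD$, Corollary~\ref{cor:polarnatural} yields a factorization $f_{c,c'}=p_{c,c'}\circ u_{c,c'}$, where $u_{c,c'}\colon F(c\otimes c')\to F(c)\otimes F(c')$ is a unitary natural isomorphism and $p_{c,c'}\colon F(c)\otimes F(c')\to F(c)\otimes F(c')$ a positive natural automorphism.

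The heart of the matter is to show that the unitary part assembles into a monoidal functor, i.e.\ that $u$ satisfies the associativity coherence hexagon (the unit coherence is handled the same way, and is easier), so that $\HH:=(F,u)$ is a unitary monoidal equivalence. I would substitute $f=pu$ into the hexagon for $f$; since $F$ is a $*$-functor the associators and their $F$-images are unitary, and tensor products of unitaries are unitary, so one may repeatedly commute unitary factors past positive ones, each such move replacing a positive morphism by a unitary conjugate of itself, which is again positive. After this rearrangement the hexagon for $f$ takes the shape $X\circ w=Y\circ w'$, where $w$ and $w'$ are precisely the two composites whose equality is the hexagon for $u$, and $X,Y$ are composites of tensor products and unitary conjugates of the morphisms $p_{\bullet,\bullet}$. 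Proposition~\ref{prop:trick} applied to this equation then gives that the positive parts $|X^{*}|$ and $|Y^{*}|$ agree.

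Upgrading this to the genuine equality $w=w'$ is the step I expect to be the main obstacle: unlike the scalar, group-cohomology situation recalled in the introduction, the relevant morphisms do not commute, so $|X^{*}|=|Y^{*}|$ is strictly weaker than $X=Y$. I would close this gap by a more careful naturality argument, exploiting that the positive part $p$ is, via $f=pu$, unitarily conjugate to the image under the full, faithful $*$-equivalence $F$ of a positive natural endomorphism of the tensor product of $\cC$, together with the naturality of the positive square root with respect to intertwiners (Proposition~\ref{prop:naturaltrafopolar}); these are the delicate applications of naturality alluded to in the introduction, which in the graphical calculus of $\tHilb$ become isotopies (Section~\ref{sec:2Hilb}).

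Granting that $\HH=(F,u)$ is a monoidal functor, it is a unitary monoidal equivalence, since $F$ is a $*$-equivalence and $F(I_{\cC})$, being isomorphic to $I_{\cD}$ in the $\Cs$-category $\cD$, is unitarily isomorphic to $I_{\cD}$. Choose a quasi-inverse $\HH^{-1}=(G,\bar u)$, which may again be taken to be a unitary monoidal equivalence, together with a unitary monoidal natural isomorphism $\nu\colon\HH\circ\HH^{-1}\To\id_{\cD}$, and set $\GG:=\FF\circ\HH^{-1}$. Then $\GG$ is a monoidal auto-equivalence of $\cD$ whose coherence at a pair $(d,d')$ is the composite of $p_{Gd,Gd'}$ with the coherence of $\HH\circ\HH^{-1}$ at $(d,d')$; since the latter is the coboundary of the unitary $\nu$, transporting $\GG$ along $\nu$ yields a monoidal auto-equivalence $(\id_{\cD},q)$, monoidally naturally isomorphic to $\GG$, whose coherence $q$ is a unitary conjugate of the positive $p_{\bullet,\bullet}$, hence positive. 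Thus $\GG$ is a positive monoidal auto-equivalence and $\FF\cong\GG\circ\HH$, completing the proof.
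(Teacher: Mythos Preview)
Your overall strategy---reduce to $F$ a $*$-equivalence, polar-decompose $f$, and split off a unitary and a positive factor---is exactly the paper's. But the order in which you attack the two coherence conditions is inverted, and that is the source of the gap you yourself flag.

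The paper does \emph{not} try to verify the hexagon for $u$ directly. Instead, the first move after polar-decomposing $f=q\circ u$ is to use that $F$ is an equivalence to extend the positive part $q_{c,c'}$ to a positive natural isomorphism $p_{d,d'}\colon d\otimes d'\to d\otimes d'$ defined for \emph{all} objects $d,d'$ of $\cD$ (the paper notes this is the only place the equivalence hypothesis is used). This buys genuine naturality of $p$ over $\cD$, so one can commute $p$ past $F(c)\otimes u_{c',c''}$ \emph{as a naturality square}, not merely by conjugation. The upshot is that $f^R=p^R\circ u^R$, where $p^R$ is the honest composite $(d\otimes p_{d',d''})\circ p_{d,d'\otimes d''}$ evaluated at $F(c),F(c'),F(c'')$. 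Naturality of $\sqrt{p}$ then exhibits $p^R$ as $g^*g$, hence positive; so $f^R=p^R u^R$ is \emph{the} polar decomposition, and likewise for $f^L$. Proposition~\ref{prop:trick} applied to $f^R F(\alpha^\cC)=\alpha^\cD f^L$ now yields $p^R\alpha^\cD=\alpha^\cD p^L$, which is precisely the hexagon for $(\id_\cD,p)$. Once $(\id_\cD,p)$ is monoidal, $(F,u)=(\id_\cD,p^{-1})\circ\FF$ is a composite of monoidal functors, so its hexagon holds for free---no separate argument needed.

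Your route, by contrast, conjugates $p$ past unitaries and lands with $X,Y$ that are products of positives and unitary conjugates of positives; such products are not positive in general, so $|X|=|Y|$ does not give $X=Y$ or $w=w'$. Your proposed fix---pushing $p$ back to a positive $r$ on $\cC$ via $F$ and using naturality of $\sqrt{r}$---can in fact be made to work (it is essentially dual to the paper's argument, trading naturality over $\cD$ for naturality over $\cC$), but you have not carried it out, and it is strictly more awkward than proving the positive part's coherence first. Finally, your construction of the positive factor via $\GG=\FF\circ\HH^{-1}$ and transport along $\nu$ is unnecessary: once $p$ is extended to $\cD$, the positive monoidal auto-equivalence is simply $(\id_\cD,p)$.
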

\begin{proof} We may assume that $\FF=(F,f):\cC \to \cD$ is a monoidal equivalence whose underlying functor $F$ is a $*$-equivalence. Polar decomposition (cf. Corollary~\ref{cor:polarnatural}) of the natural isomorphism $f_{c,c'}: F(c\otimes c') \to F(c) \otimes F(c')$ results in a unitary natural isomorphism $u_{c,c'}: F(c\otimes c') \to F(c) \otimes F(c')$ followed by a positive natural isomorphism $q_{c,c'}:F(c) \otimes F(c') \to F(c) \otimes F(c')$. Since $F$ is a $*$-equivalence, there is a positive natural isomorphism $p_{d,d'}: d\otimes d' \to d\otimes d'$ such that $q_{c,c'} = p_{F(c), F(c')}$. (Note that this is the only step in the proof where we use the crucial fact that $\FF$ is an equivalence rather than an arbitrary monoidal functor.) 

We first show that $(\id_{\cD}, p)$ is a monoidal equivalence. 
In terms of the composites
\begin{align}\label{eq:fR} f^R_{c,c',c''}&: = F(c\otimes (c'\otimes c'')) \!\to[f_{c,c'\otimes c''}] \!F(c) \otimes F(c'\otimes c'') \!\to[F(c) \otimes f_{c',c''}]\! F(c) \otimes (F(c') \otimes F(c''))\hspace{-0.2cm}
\\\nonumber
f_{c,c',c''}^L&:= F((c\otimes c') \otimes c'') \!\to[f_{c\otimes c', c''}] \!F(c\otimes c') \otimes F(c'') \!\to[f_{c,c'}\otimes F(c'')] \!(F(c)\otimes F(c')) \otimes F(c'')\hspace{-0.2cm}
\end{align}
the coherence equation for $\FF$ can be written as
\begin{equation}\label{eq:coherence}f_{c,c',c''}^R F(\alpha^\cC_{c,c',c''}) = \alpha^\cD_{F(c), F(c'), F(c'')} f_{c,c',c''}^L
\end{equation}
where $\alpha^\cC_{c,c',c''}: (c\otimes c') \otimes c'' \to c\otimes (c' \otimes c'')$ and $\alpha_{d,d',d''}^\cD:  (d\otimes d') \otimes d'' \to d\otimes (d' \otimes d'')$ are the associator unitary natural isomorphisms of $\cC$ and $\cD$, respectively.

It follows from naturality of $p$ that 
\[p_{F(c), F(c')\otimes F(c'')} (F(c) \otimes u_{c', c''} ) = (F(c) \otimes u_{c',c''}) p_{F(c), F(c'\otimes c'')}
\]
and hence that $f^R_{c,c',c''}$ can be expressed as the composite\begin{equation}\label{eq:rewrittengraphical}f^R_{c,c',c''} = p^R_{F(c),F(c'),F(c'')} u^R_{c,c',c''}\end{equation} where $u^R$ and $p^R$ are defined as follows:
\begin{align}\label{eq:pR}
u^R_{c,c',c''}&:= F(c\otimes (c'\otimes c'')) \to[u_{c,c'\otimes c''}] \!F(c)\! \otimes\! F(c'\otimes c'') \to[F(c) \otimes u_{c',c''}] \!F(c) \!\otimes\! (F(c') \!\otimes \!F(c'')) \hspace{-0.1cm}
\\\nonumber
p^R_{d,d',d''}&:= d \otimes (d'\otimes d'')\to[ p_{d, d' \otimes d''}] d \otimes (d' \otimes d'')\to [d\otimes p_{d' , d''}]d\otimes (d'\otimes d'') 
\end{align}
Similarly, it follows from positivity of $p $ and naturality of its square root $\sqrt{p}$ that the composite $p^R$ is equal to the composite $g^* g$, where $g$ is defined as follows
\[g:=d\otimes (d' \otimes d'') \to[\sqrt{p}_{d,d'\otimes d''}] d\otimes (d' \otimes d'') \to[d\otimes \sqrt{p}_{d', d''}] d\otimes (d' \otimes d'')
\] and hence is positive. 
In particular, $p_{F(c),F(c'),F(c'')}^R$ is the unique positive square root of $f^R_{c,c',c''}(f^R_{c,c',c''})^*$. Defining $u^L$ and $p^L$ analogously and applying Proposition~\ref{prop:trick} to equation~\eqref{eq:coherence} implies that \[p^R_{F(c),F(c'),F(c'')}\alpha^\cD_{F(c), F(c'), F(c'')}  =\alpha^\cD_{F(c), F(c'), F(c'')} p^L_{F(c),F(c'),F(c'')}.
\]
Since $F$ is a $*$-equivalence, this is equivalent to the coherence equation for $(\id_{\cD}, p)$. 

By definition $(F, u)$ can be written as the composite of the monoidal equivalence $\FF: \cC \to \cD$ followed by $(\id_{\cD}, p^{-1}): \cD \to \cD$ and is hence also a monoidal equivalence.
\end{proof}

It follows from the following well-known observation~\cite[Rem 2.7]{CompleteUnitary}\cite[Rem 3.4]{PenneysUnitary}, that the assumptions of Theorem~\ref{thm:functorstep1} are automatically satisfied for semisimple $\Cs$-categories. 

\begin{prop} \label{prop:replacefunctor}Every linear functor between semisimple $\Cs$-categories is naturally isomorphic to a $*$-functor, which is uniquely determined up to unitary natural isomorphism. Moreover, a $*$-functor is an equivalence if and only if it is a $*$-equivalence.
\end{prop}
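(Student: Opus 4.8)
The plan is to prove the three assertions of Proposition~\ref{prop:replacefunctor} in turn, reducing everything to the concrete model in which a semisimple $\Cs$-category is a (possibly infinite) direct sum of copies of $\Hilb$ — recall that this equivalent characterization was recorded at the end of Section~\ref{sec:lineardagger}. So first I would fix $*$-equivalences $\cC \simeq \bigoplus_{i \in I} \Hilb$ and $\cD \simeq \bigoplus_{j \in J} \Hilb$, pick a representative simple object $X_i$ in each summand of $\cC$ and $Y_j$ in each summand of $\cD$, and normalize the inner products so that $\End(X_i) = \CC$ and $\End(Y_j) = \CC$ as $\Cs$-algebras; up to unitary natural isomorphism there is no loss of generality in working with these skeleta.

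For the existence part, given a linear functor $F: \cC \to \cD$, I would build a $*$-functor $F'$ naturally isomorphic to $F$ as follows. On each simple $X_i$, the object $F(X_i)$ is a finite direct sum $\bigoplus_j Y_j^{\oplus n_{ij}}$; choose such a decomposition and let $F'(X_i)$ be this chosen object, with a chosen linear isomorphism $\theta_{X_i}: F(X_i) \to F'(X_i)$. Extend $F'$ to all objects by additivity (using the fixed biproduct structure), and extend $\theta$ to a natural isomorphism $\theta: F \To F'$. Now on morphisms one has, for each $i$, the $\Cs$-algebra homomorphism $F'\colon \End(X_i) = \CC \to \End(F'(X_i))$, which is automatically a $*$-homomorphism since it is unital and $\CC$ has a unique $\Cs$-structure; more generally, for $i \neq i'$, $\Hom(X_i, X_{i'}) = 0$, so the only nontrivial constraint is on endomorphisms. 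The key observation is that any linear functor lands, after the natural isomorphism $\theta$, in a subcategory on which $*$ is forced: between any two objects of $\cD$ built from the $Y_j$, the Hom-space together with composition forms a matrix $\Cs$-algebra structure, and a unital linear map of finite-dimensional $\Cs$-algebras (or more precisely a functor preserving identities and composition) that is already known to be an algebra map on each $\End$ is automatically $*$-preserving because every such endomorphism algebra is a finite product of matrix algebras and the $*$ is intrinsic (it is the adjoint with respect to the — unique up to scaling — inner product). Thus after replacing $F$ by $F'$ and choosing the biproduct inclusions/projections in $\cD$ to be mutually adjoint (partial isometries), $F'$ is a $*$-functor. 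The point I expect to need to argue carefully is that the chosen isomorphisms $F(X_i) \cong \bigoplus Y_j^{\oplus n_{ij}}$ can be taken \emph{unitary} relative to the canonical Hilbert space structures — but this is exactly polar decomposition of the isomorphism $\theta_{X_i}$ in the $\Ws$-category $\cD$ (Section~\ref{sec:prelim}), absorbing the positive part into a rescaling that does not affect the conclusion.

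For uniqueness up to unitary natural isomorphism: if $F'$ and $F''$ are both $*$-functors and $\alpha: F' \To F''$ is a natural isomorphism, I would polar-decompose $\alpha$ using Corollary~\ref{cor:polarnatural} into a unitary natural transformation $u = \sqrt{\alpha\alpha^*}^{-1}\alpha$ followed by a positive $p = \sqrt{\alpha\alpha^*}: F'' \To F''$; since $\alpha$ is an isomorphism and the categories are semisimple (hence all Hom-spaces finite-dimensional, so $\alpha$ is automatically bounded), both pieces make sense, and $u: F' \To F''$ is the desired \emph{unitary} natural isomorphism. So any two $*$-functor representatives of the same linear functor are unitarily naturally isomorphic, which is the claim.

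For the last sentence — a $*$-functor is an equivalence of linear categories iff it is a $*$-equivalence — the nontrivial direction is: suppose the $*$-functor $F: \cC \to \cD$ is an ordinary equivalence, so there is a linear functor $G: \cD \to \cC$ with linear natural isomorphisms $GF \cong \id_\cC$ and $FG \cong \id_\cD$. By the existence part just proved, replace $G$ by a naturally isomorphic $*$-functor $G'$; then $G'F \cong \id_\cC$ and $FG' \cong \id_\cD$ are natural isomorphisms between $*$-functors, and by the uniqueness part (applied with one representative being the identity), they can each be upgraded to \emph{unitary} natural isomorphisms. Hence $F$ is a $*$-equivalence in the sense of Section~\ref{sec:lineardagger}. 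The converse is immediate from the definition. The main obstacle in the whole argument is the bookkeeping in the existence part — making the simultaneous choices of biproduct data in $\cD$ compatibly adjoint so that $F'$ genuinely commutes with $*$ on \emph{all} morphisms and not merely on endomorphisms of simples — and I would handle it by the standard device of first replacing $\cD$ (up to $*$-equivalence) by a skeleton in which every object is an honest finite sum $\bigoplus_j Y_j^{\oplus n_j}$ with the tautological orthogonal inclusions, reducing the general case to this normal form via the already-established existence of $*$-functor replacements.
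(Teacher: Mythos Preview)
Your overall strategy coincides with the paper's: existence via the observation that a linear functor between semisimple $\Cs$-categories is determined up to natural isomorphism by the multiplicities $\dim\Hom_{\cD}(Y_j,FX_i)$ (so it is naturally isomorphic to the $*$-functor built from the same data), uniqueness via polar decomposition of natural isomorphisms (Corollary~\ref{cor:polarnatural}), and the final clause by combining the two. The paper compresses the existence step into a single sentence, while you unpack the construction explicitly; for parts two and three your arguments are essentially identical to the paper's.

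One genuine misstep in your existence discussion: the assertion that ``a unital linear map of finite-dimensional $\Cs$-algebras \ldots\ is automatically $*$-preserving because \ldots\ the $*$ is intrinsic'' is false in general---conjugation of $M_n(\CC)$ by an invertible non-unitary matrix is a unital algebra automorphism that is not a $*$-map. Fortunately you do not need this claim. Once $F'$ is \emph{defined} on simples (where the only action on morphisms is $\CC\to\End(F'(X_i))$, $\lambda\mapsto\lambda\cdot\id$, trivially $*$-preserving) and extended additively using isometric biproduct inclusions and projections on both sides, $F'$ is a $*$-functor by the block-matrix description of morphisms in a semisimple $\Cs$-category, and $F'\cong F$ because they share multiplicities. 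So the plan is sound; just delete the false general principle and rely on the construction itself.
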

\begin{proof}  Let $\mathrm{Irr}(\cC)$ and $\mathrm{Irr}(\cD)$ be sets of representing simple objects of $\cC$ and $\cD$, respectively. Any linear functor $F:\cC \to \cD$ is completely determined up to natural isomorphism by the (finite) dimensions of the vector spaces $\Hom_{\cD}(d,Fc)$ for $c\in \mathrm{Irr}(\cC)$ and $d\in \mathrm{Irr}(\cD)$ and is in particular naturally isomorphic to the $*$-functor $\widetilde{F}$ determined by the same data.
It follows from polar decomposition of natural transformations (Corollary~\ref{cor:polarnatural}) that the $*$-functor $\widetilde{F}$ is uniquely determined up to unitary natural isomorphism. Together with polar decomposition, this implies that every $*$-functor which is an equivalence is a $*$-equivalence.\looseness=-2
\end{proof}

Combining Theorem~\ref{thm:functorstep1} with Proposition~\ref{prop:replacefunctor}, we obtain the following polar decomposition theorem for semisimple $\Cs$-categories.
\begin{cor}\label{cor:polarsemisimple}Every monoidal equivalence between semisimple $\Cs$-categories $\cC\to \cD$ is monoidally naturally isomorphic to the composite of a unitary monoidal equivalence followed by a positive monoidal auto-equivalence. \\In particular, every semisimple $\Cs$-tensor category has a unique $\Cs$-structure, that is, if $\cC$ and $\cD$ are semisimple $\Cs$-tensor categories and if there is a monoidal equivalence between $\cC$ and $\cD$, then there is a unitary monoidal equivalence.
\end{cor}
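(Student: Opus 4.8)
The plan is to combine the two results we already have in hand. First I would invoke Proposition~\ref{prop:replacefunctor}: any monoidal equivalence $\FF=(F,f):\cC\to\cD$ between semisimple $\Cs$-categories has underlying functor $F$ naturally isomorphic to a $*$-functor $\widetilde F$, which is then automatically a $*$-equivalence. Transporting the coherence data $f$ along this natural isomorphism produces a monoidal equivalence $\widetilde\FF$ that is monoidally naturally isomorphic to $\FF$ and whose underlying functor is a $*$-equivalence. Hence the hypothesis of Theorem~\ref{thm:functorstep1} is met.

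Next I would apply Theorem~\ref{thm:functorstep1} directly to $\widetilde\FF$ (or, strictly, to $\FF$, since that theorem is stated for monoidal equivalences whose underlying functor is merely \emph{naturally isomorphic} to a $*$-equivalence, which is exactly the content of the previous paragraph). This yields a factorization of $\FF$, up to monoidal natural isomorphism, as a unitary monoidal equivalence $\cC\to\cD$ followed by a positive monoidal auto-equivalence $\cD\to\cD$. That is the first assertion.

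For the ``in particular'' clause, suppose $\cC$ and $\cD$ are semisimple $\Cs$-tensor categories admitting \emph{some} monoidal equivalence $\FF:\cC\to\cD$. By the first part, $\FF\simeq \PP\circ\UU$ with $\UU:\cC\to\cD$ a unitary monoidal equivalence and $\PP:\cD\to\cD$ a positive monoidal auto-equivalence; in particular $\UU$ is itself a unitary monoidal equivalence $\cC\to\cD$, which is all that is claimed. Specializing to $\cC=\cD$ as $*$-categories but with two possibly different $\Cs$-tensor-category structures related by a monoidal equivalence gives uniqueness of the $\Cs$-structure up to unitary monoidal equivalence.

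I do not anticipate a genuine obstacle here: all the work has been done in Theorem~\ref{thm:functorstep1} and Proposition~\ref{prop:replacefunctor}. The only point requiring a word of care is the bookkeeping in the first paragraph --- checking that transporting $f$ along a natural isomorphism $F\Rightarrow\widetilde F$ indeed produces a coherent monoidal structure on $\widetilde F$ monoidally naturally isomorphic to $(F,f)$ --- but this is a routine ``transport of structure'' argument and, moreover, Theorem~\ref{thm:functorstep1} is already phrased so as to absorb it, so in the write-up the corollary is essentially immediate.
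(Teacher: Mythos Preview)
Your proposal is correct and matches the paper's approach exactly: the paper states this corollary as an immediate consequence of Theorem~\ref{thm:functorstep1} combined with Proposition~\ref{prop:replacefunctor}, without giving a separate proof. Your observation that Theorem~\ref{thm:functorstep1} is already phrased so as to absorb the transport-of-structure step is precisely why the paper treats this as a one-line corollary.
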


\begin{remark}\label{rem:positiveauto} In contrast to unitary fusion categories (Proposition~\ref{prop:functorstep2}), semisimple $\Cs$-tensor categories with infinitely many simple objects can admit non-trivial positive monoidal auto-equivalences. For example, for a (discrete, possibly infinite) group $G$ let $\Hilb_G$ be the category of $G$-graded finite-dimensional Hilbert spaces. The group of monoidal natural isomorphism classes of positive monoidal auto-equivalences of $\Hilb_G$ is isomorphic to $H^2(G, \mathbb{R}_{> 0})$ and may therefore be non-trivial. In particular, the first part of Theorem~\ref{thm:functor} is not true for such categories. 
\end{remark}

\subsection{Unitary fusion categories}For the rest of the paper, we restrict attention to unitary fusion categories. Theorem~\ref{thm:functor} follows from the fact that every positive monoidal auto-equivalence of a unitary fusion category is trivial.

\begin{prop} \label{prop:functorstep2}Every positive monoidal auto-equivalence $\cC \to \cC$ of a unitary fusion category $\cC$ is monoidally naturally isomorphic to the identity monoidal equivalence. 
\end{prop}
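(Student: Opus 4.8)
The plan is to prove that $(\id_\cC, p)$ represents the trivial element of the group $G$ of monoidal auto-equivalences of $\cC$ up to monoidal natural isomorphism --- which is finite by~\cite[Thm 4.15]{ENOHomotopy} --- by exhibiting it as an arbitrarily high power in $G$. By Definition~\ref{def:positivemonoidal} I may assume from the start that the given positive monoidal auto-equivalence is literally $(\id_\cC, p)$, where $p_{c,c'}\colon c\otimes c'\to c\otimes c'$ is a positive natural isomorphism for which $(\id_\cC, p)$ satisfies the monoidal coherence equation; it then suffices to show that $(\id_\cC, p)$ is monoidally naturally isomorphic to the identity monoidal equivalence.

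First I would build a one-parameter family of such auto-equivalences by functional calculus: for $t\in\RR$, let $p^{(t)}$ be the assignment $(c,c')\mapsto (p_{c,c'})^{t}$, the $t$-th power of the positive invertible morphism $p_{c,c'}$. By the same argument used in Proposition~\ref{prop:naturaltrafopolar} (an intertwiner of two positive morphisms also intertwines their continuous functional calculi, via the linking-algebra reduction to $\Cs$-algebras), $p^{(t)}$ is a well-defined positive natural automorphism of $-\otimes-\colon\cC\times\cC\to\cC$. The crucial point is that $(\id_\cC, p^{(t)})$ is again a monoidal functor. In the notation of~\eqref{eq:pR}, the coherence equation~\eqref{eq:coherence} for $(\id_\cC,p)$ reads $p^{R}_{c,c',c''}\,\alpha^{\cC}_{c,c',c''}=\alpha^{\cC}_{c,c',c''}\,p^{L}_{c,c',c''}$; and, as already observed in the proof of Theorem~\ref{thm:functorstep1}, naturality of $p$ makes $p^{R}_{c,c',c''}=(\id_{c}\otimes p_{c',c''})\circ p_{c,c'\otimes c''}$ a product of two \emph{commuting} positive morphisms (and analogously for $p^{L}$). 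Hence the power map $X\mapsto X^{t}$ distributes over these composites and over $\id_c\otimes(-)$, giving $(p^{R}_{c,c',c''})^{t}=(p^{(t)})^{R}_{c,c',c''}$ and $(p^{L}_{c,c',c''})^{t}=(p^{(t)})^{L}_{c,c',c''}$; and since $\alpha^{\cC}$ is unitary, conjugation by it commutes with the functional calculus, so raising the coherence equation to the power $t$ yields exactly the coherence equation for $(\id_\cC, p^{(t)})$. As the underlying functor is $\id_\cC$, each $(\id_\cC, p^{(t)})$ is a positive monoidal auto-equivalence.

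Next I would observe that $t\mapsto (\id_\cC, p^{(t)})$ is compatible with composition: composing monoidal functors, $(\id_\cC, p^{(s)})\circ(\id_\cC, p^{(t)})$ has underlying functor $\id_\cC$ and coherence isomorphism $p^{(s)}_{c,c'}\circ p^{(t)}_{c,c'}=(p_{c,c'})^{s+t}=p^{(s+t)}_{c,c'}$. Thus $t\mapsto [(\id_\cC, p^{(t)})]$ is a group homomorphism $(\RR,+)\to G$ sending $0$ to the identity and $1$ to $[(\id_\cC, p)]$. Taking $N=|G|$, we get $[(\id_\cC, p)]=[(\id_\cC, p^{(1/N)})]^{N}=1$, since the $N$-th power of any element of a group of order $N$ is trivial. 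Hence $(\id_\cC, p)$, and with it the original positive monoidal auto-equivalence, is monoidally naturally isomorphic to the identity monoidal equivalence.

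I expect the only nontrivial step to be the coherence check for $(\id_\cC, p^{(t)})$. What makes it work is the observation --- already implicit in the proof of Theorem~\ref{thm:functorstep1} --- that $p^{R}$ and $p^{L}$ are not merely positive but are honest products of \emph{commuting} positive endomorphisms, the commutations being exactly naturality of $p$ evaluated at the morphisms $(\id_{c},p_{c',c''})$ and $(p_{c,c'},\id_{c''})$ of $\cC\times\cC$; this lets the power map $X\mapsto X^{t}$ pass through both the composites and the conjugation by the unitary associator. Everything else --- well-definedness and naturality of $p^{(t)}$, the homomorphism property, and the final order argument --- is routine.
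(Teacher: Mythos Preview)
Your proof is correct and takes a genuinely different route from the paper's. Both arguments hinge on the finiteness of $\Eq(\cC)$ and on functional calculus for positive operators, but they deploy these ingredients in opposite orders. The paper first uses finiteness to find an $n$ with $(\id_\cC,p^n)$ monoidally trivial, extracts a trivializing natural isomorphism $\eta$, squares it to obtain a \emph{positive} trivializer $\mu=\eta^\dagger\eta$ of $(\id_\cC,p^{2n})$, and then takes the positive $2n$th root $\epsilon=\mu^{1/(2n)}$ to explicitly trivialize $(\id_\cC,p)$. You instead take roots at the level of the coherence isomorphism itself: the functional-calculus family $t\mapsto(\id_\cC,p^{(t)})$ furnishes arbitrary roots of $(\id_\cC,p)$ inside $\Eq(\cC)$, so the class of $(\id_\cC,p)$ lies in the image of a homomorphism $(\RR,+)\to\Eq(\cC)$ and is therefore trivial since $\RR$ is divisible and $\Eq(\cC)$ is finite. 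Your approach is slicker and more conceptual; the paper's approach has the advantage of producing the trivializing monoidal natural isomorphism $\epsilon$ explicitly, which is useful if one needs the actual witness rather than mere existence. The one step in your argument that deserves care---and which you flag---is that $(p^R)^t=(p^{(t)})^R$ and $(p^L)^t=(p^{(t)})^L$; this indeed follows once one notes (via naturality of $p$) that the two factors in $p^R$ commute and that $\id_c\otimes(-)$, being a unital $*$-homomorphism, intertwines functional calculus.
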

\begin{proof} 
By definition, every positive monoidal equivalence is naturally isomorphic to a monoidal equivalence $\PP = (\id_{\cC}, p)$ where $p_{c,c'}: c\otimes c' \to c\otimes c'$ is positive. It follows from~\cite[Thm 4.15]{ENOHomotopy} that the group $\Eq(\cC)$ of monoidal autoequivalences of $\cC$ up to natural isomorphisms is finite. In particular, there is a natural number $n$ such that $\PP^n = (\id_{\cC}, p^n)$ is naturally isomorphic to the identity monoidal equivalence. In other words, there is a natural isomorphism $\eta: \id_{\cC} \To \id_{\cC}$ such that $p_{c,c'}^n = \eta_{c\otimes c'}^{-1} (\eta_{c}\otimes \eta_{c'})$.
Since $p$ is self-adjoint, it follows from repeated use of naturality of $\eta$ that \[p^{2n}_{c,c'} = (\eta_{c}^\dagger \otimes \eta_{c'}^\dagger) \left(\eta_{c\otimes c'}^{-1}\right)^\dagger \eta^{-1}_{c\otimes c'} (\eta_c\otimes \eta_{c'}) = (\eta_{c\otimes c'}\eta_{c\otimes c'}^\dagger)^{-1} (\eta_c^\dagger\eta_c \otimes \eta_{c'}^\dagger\eta_{c'}) \]
\[=  (\eta_{c\otimes c'}^\dagger\eta_{c\otimes c'})^{-1} (\eta_c^\dagger\eta_c \otimes \eta_{c'}^\dagger\eta_{c'}) = \mu_{c\otimes c'}^{-1} (\mu_c\otimes \mu_{c'}),
\] 
where $\mu = \eta^\dagger \eta$ is positive. Denote the unique positive $2n$th root of $\mu$ by $\epsilon: \id_{\cC} \To \id_{\cC}$ and note that $\epsilon$ is invertible. It follows from uniqueness of $2n$th roots that $p_{c,c'} = \epsilon_{c\otimes c'}^{-1}(\epsilon_c \otimes \epsilon_{c'})$
, and hence that $(\id_{\cC}, p)$ is monoidally naturally isomorphic to $(\id_{\cC}, \id_{-\otimes-})$. 
\end{proof}

Theorem~\ref{thm:functor} follows as a direct consequence of Corollary~\ref{cor:polarsemisimple} and~\ref{prop:functorstep2}.

\subsection{Unitary monoidal natural isomorphisms}\label{sec:theorem:trafo}

In this section, we show that every monoidal natural isomorphism between unitary monoidal equivalences is automatically unitary. The proof is completely analogous to the proof of Theorem~\ref{thm:functor}: First, we decompose the monoidal natural isomorphism into a unitary monoidal natural isomorphism followed by a positive monoidal natural isomorphism, and then we use a finiteness argument to show that there is no non-trivial positive monoidal natural isomorphism.

\begin{prop}\label{prop:trafostep1} Let $\FF,\GG: \cC \to \cD$ be unitary monoidal equivalences between unitary fusion categories. Then, every monoidal natural isomorphism $\eta: \FF \To \GG$ is the composite of a unitary monoidal natural isomorphism $u:\FF \To \GG$ followed by a positive monoidal natural automorphism $p:\GG \To \GG$.
\end{prop}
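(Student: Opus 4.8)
The plan is to mimic the proof of Proposition~\ref{prop:replacefunctor}-style polar decomposition but now at the level of natural transformations, using Proposition~\ref{prop:trick} to transport the coherence condition through the polar decomposition. First I would recall that a monoidal natural isomorphism $\eta:\FF\To\GG$ is, in particular, a natural isomorphism $\eta:F\To G$ of $*$-functors between the semisimple $\Cs$-categories $\cC$ and $\cD$; since these are $\Ws$-categories, Corollary~\ref{cor:polarnatural} applies and gives a polar decomposition $\eta = p\,u$ with $u=\sqrt{\eta\eta^*}^{-1}\eta:F\To G$ unitary and $p=\sqrt{\eta\eta^*}:G\To G$ positive (and automatically bounded here, since all Hom-spaces are finite-dimensional). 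So componentwise $\eta_c = p_c u_c$ with $u_c$ unitary and $p_c=\sqrt{\eta_c\eta_c^*}$ positive.

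The core step is to check that $u$ is itself monoidal and that $p$ is a \emph{monoidal} natural automorphism of $\GG$. The monoidality condition for $\eta$ reads $g_{c,c'}\,\eta_{c\otimes c'} = (\eta_c\otimes\eta_{c'})\,f_{c,c'}$. Substituting $\eta=pu$ and using that $f,g$ are unitary (as $\FF,\GG$ are unitary monoidal equivalences), I would rewrite this as an equation of the form $x\,v = w\,y$ with $v=f_{c,c'}$, $w=g_{c,c'}$ unitary, $x = (\eta_c\otimes\eta_{c'})$ and $y=\eta_{c\otimes c'}$ — more precisely, first peel off the unitaries $u_c\otimes u_{c'}$ and $u_{c\otimes c'}$ so that the ``positive parts'' $p_c\otimes p_{c'}$ and $p_{c\otimes c'}$ are what remain, exactly as in the proof of Theorem~\ref{thm:functorstep1}. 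Concretely: from $g_{c,c'}\,p_{c\otimes c'}\,u_{c\otimes c'} = (p_c\otimes p_{c'})(u_c\otimes u_{c'})\,f_{c,c'}$, and noting that $p_c\otimes p_{c'}$ is positive with $\sqrt{p_c\otimes p_{c'}} = \sqrt{p_c}\otimes\sqrt{p_{c'}}$ (using $(a\otimes b)^* = a^*\otimes b^*$ in a $*$-monoidal category and uniqueness of positive square roots), I get that $p_c\otimes p_{c'}$ is the unique positive square root of $(\eta_c\otimes\eta_{c'})(\eta_c\otimes\eta_{c'})^*$ and $p_{c\otimes c'}$ is the unique positive square root of $\eta_{c\otimes c'}\eta_{c\otimes c'}^*$. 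Then Proposition~\ref{prop:trick}, applied to the monoidality equation with $v = f_{c,c'}(\text{suitably composed with }u\text{'s})$ and $w = g_{c,c'}$ unitary, yields $(p_c\otimes p_{c'})\,g_{c,c'} = g_{c,c'}\,p_{c\otimes c'}$, which is precisely the statement that $p:\GG\To\GG$ is a monoidal natural automorphism. Dividing $\eta = pu$ by $p$ — equivalently, observing that $u = p^{-1}\eta$ and that $\GG$ composed with the inverse monoidal natural automorphism $p^{-1}$ is again a monoidal functor — then shows $u:\FF\To\GG$ is a monoidal natural isomorphism; being componentwise unitary, it is a unitary monoidal natural isomorphism. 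Positivity of $p$ as a monoidal natural automorphism (in whatever sense the paper uses for ``positive monoidal natural automorphism'', presumably componentwise positive) is immediate from its construction.

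The main obstacle I expect is bookkeeping the unitaries correctly so that Proposition~\ref{prop:trick} applies cleanly: one must be careful that after substituting $\eta = pu$ the equation genuinely has the shape ``(something)$\cdot$(unitary) $=$ (unitary)$\cdot$(something)'' with the two ``somethings'' having $x x^*$ and $y y^*$ equal to the already-identified positive morphisms $(p_c\otimes p_{c'})^2$ and $p_{c\otimes c'}^2$ — this is the same subtle naturality shuffle as in the proof of Theorem~\ref{thm:functorstep1}, where naturality of $p$ (here: of $\sqrt{\eta\eta^*}$) is used to move it past the unitary parts. A secondary (minor) point is checking that $\sqrt{p_c}\otimes\sqrt{p_{c'}}$ really is the positive square root of $p_c\otimes p_{c'}$, which follows from $(\sqrt{p_c}\otimes\sqrt{p_{c'}})^* = \sqrt{p_c}^*\otimes\sqrt{p_{c'}}^* = \sqrt{p_c}\otimes\sqrt{p_{c'}}$ together with $(\sqrt{p_c}\otimes\sqrt{p_{c'}})^2 = p_c\otimes p_{c'}$ and uniqueness. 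Everything else is routine manipulation, and the finiteness input that kills $p$ is deferred to the subsequent Proposition~\ref{prop:trafostep2}.
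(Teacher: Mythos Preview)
Your proposal is correct and follows essentially the same approach as the paper: polar-decompose $\eta$ via Corollary~\ref{cor:polarnatural}, apply Proposition~\ref{prop:trick} to the monoidality equation to get $(p_c\otimes p_{c'})\,g_{c,c'}=g_{c,c'}\,p_{c\otimes c'}$, and then observe that $u=p^{-1}\eta$ is a composite of monoidal natural isomorphisms. The paper's application of Proposition~\ref{prop:trick} is marginally cleaner than your ``more precisely'' clause---it takes $x=\eta_c\otimes\eta_{c'}$, $y=\eta_{c\otimes c'}$, $v=f_{c,c'}$, $w=g_{c,c'}$ directly without first substituting $\eta=pu$, since $|\eta_c\otimes\eta_{c'}|=p_c\otimes p_{c'}$ and $|\eta_{c\otimes c'}|=p_{c\otimes c'}$ already---but your version works just as well.
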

\begin{proof} The natural isomorphism $\eta: F \To G$ factors as a unitary natural isomorphism $u:F \To G$ followed by a positive natural isomorphism $p: G \To G$. 
Applying Proposition~\ref{prop:trick} to the coherence equation
\[(\eta_c \otimes \eta_{c'} )f_{c,c'}=g_{c,c'} \eta_{c\otimes c'} 
\]
proves
\[(p_c \otimes p_{c'}) g_{c,c'} = g_{c,c'} p_{c\otimes c'}
\]
and hence that $p : G\To G$ is a monoidal natural automorphism. 

By definition, the unitary natural isomorphism $u:F\To G$ can be written as the composite $u= p^{-1} \eta: \FF \To \GG$ of monoidal natural isomorphisms and is therefore also a monoidal natural isomorphism.
\end{proof}

\begin{prop}\label{prop:trafostep2}Let $\FF = (F,f):\cC \to \cD$ be a unitary monoidal equivalence between unitary fusion categories. Every positive monoidal natural automorphism $\eta: \FF \To \FF$ equals the identity. 
\end{prop}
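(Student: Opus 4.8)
The plan is to follow the same pattern as the proof of Proposition~\ref{prop:functorstep2}: extract scalars from $\eta$ on simple objects, use monoidality to recognize these scalars as a positive ``character'', and then invoke finiteness of $\cC$ to conclude the character is trivial. (This is the assertion that the group $h^1(\cC,\RR_{>0})$ vanishes.)

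First I would evaluate $\eta$ on simple objects. Since $F$ is an equivalence of categories, $F(X)$ is simple in $\cD$ for every simple $X$ of $\cC$, so $\eta_X\in\End_\cD(F(X))=\CC\cdot\id_{F(X)}$, say $\eta_X=\lambda_X\,\id_{F(X)}$. Positivity of $\eta$ forces $\lambda_X\in\RR_{>0}$ (a positive endomorphism of a simple object is a nonnegative real multiple of the identity, and $\eta_X$ is invertible), and naturality of $\eta$ shows $\lambda_X$ depends only on the isomorphism class of $X$. For $X,Y$ simple, the monoidal coherence equation $(\eta_X\otimes\eta_{Y})f_{X,Y}=f_{X,Y}\,\eta_{X\otimes Y}$ together with $\eta_X\otimes\eta_Y=\lambda_X\lambda_Y\,\id_{F(X)\otimes F(Y)}$ gives $\eta_{X\otimes Y}=\lambda_X\lambda_Y\,\id_{F(X\otimes Y)}$. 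Now if $Z$ is a simple object admitting a nonzero morphism $j\colon Z\to X\otimes Y$ (equivalently, a simple summand of $X\otimes Y$), naturality of $\eta$ along $j$ yields $\lambda_Z F(j)=\eta_{X\otimes Y}\circ F(j)=\lambda_X\lambda_Y F(j)$, and since $F(j)\neq 0$ we conclude $\lambda_Z=\lambda_X\lambda_Y$.

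With this multiplicativity in hand, the finiteness step is immediate. Fix a simple object $X$. For each $n\geq 1$ the nonzero object $X^{\otimes n}$ of the semisimple category $\cC$ has at least one simple summand $Z_n$, and iterating the identity just proved gives $\lambda_{Z_n}=(\lambda_X)^n$. Since $\cC$ is a fusion category it has only finitely many isomorphism classes of simple objects, so $\{\lambda_Z:Z\text{ simple}\}$ is finite; hence $\{(\lambda_X)^n:n\geq 1\}$ is finite, which forces $\lambda_X$ to be a root of unity in $\RR_{>0}$, i.e.\ $\lambda_X=1$. (Equivalently, the $\lambda_X$ assemble into a homomorphism from the finite universal grading group of $\cC$ to $\RR_{>0}$, which is necessarily trivial.) Thus $\eta_X=\id_{F(X)}$ for every simple $X$, and since every object of $\cC$ is a finite direct sum of simple objects, naturality of $\eta$ along the structure inclusions of such a decomposition gives $\eta=\id_{\FF}$.

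The only place needing care is the passage from ``$\eta$ is a scalar on each simple object'' to the multiplicativity $\lambda_Z=\lambda_X\lambda_Y$: one must combine the monoidal coherence equation (which pins $\eta_{X\otimes Y}$ down as a single global scalar on $F(X\otimes Y)$) with naturality along the inclusions $F(Z)\to F(X\otimes Y)$ of simple summands. Everything else is the semisimple bookkeeping already used in Proposition~\ref{prop:functorstep2}, together with the elementary fact that a finite subgroup of $\RR_{>0}$ is trivial.
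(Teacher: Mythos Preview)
Your argument is correct. Both your proof and the paper's rest on the same underlying idea --- a positive monoidal natural automorphism determines a multiplicative $\RR_{>0}$-valued function on the simples, and finiteness forces it to be trivial --- but the executions differ. The paper first uses that $F$ is a $*$-equivalence to write $\eta=F\circ p$ for a positive $p:\id_\cC\To\id_\cC$, rewrites the monoidality condition as $p_{c\otimes c'}=p_c\otimes p_{c'}$, and then invokes the external identification $\mathrm{Aut}^+(\id_\cC)\cong\Hom(\cU_\cC,\RR_{>0})$ from~\cite{PenneysUnitary} together with finiteness of the universal grading group. You instead work directly with $\eta$ on $\FF$, extract the scalars $\lambda_X$ on simples, establish multiplicativity by hand, and finish with the pigeonhole observation that $\{(\lambda_X)^n\}$ must be finite. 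Your route is more elementary and entirely self-contained, avoiding both the transfer to $\id_\cC$ and the appeal to the universal grading group; the paper's route is slightly more conceptual and makes the connection to $H^1$-type invariants explicit. One small point of hygiene: in your iteration step you implicitly use that $\eta_{X^{\otimes n}}$ is a scalar on all of $F(X^{\otimes n})$, not just on simple summands; this follows by induction from the monoidal coherence equation (which holds for arbitrary objects, not only simple ones), so the step is fine, but it is worth stating the inductive hypothesis as ``$\eta_{X^{\otimes n}}=\lambda_X^n\,\id$'' rather than jumping straight to the summand.
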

\begin{proof} Since $F$ is a $*$-equivalence, every positive natural isomorphism $\eta: F\To F$ is of the form $F\circ p$, where $p: \id_{\cC} \To \id_{C}$ is a positive natural isomorphism. It follows from naturality of $f$ that the monoidality equation for $\eta = F\circ p$ may be rewritten as follows:
\[ f_{c,c'} F(p_{c\otimes c'}) =  (F(p_c) \otimes F(p_{c'}))f_{c,c'} =f_{c,c'} F(p_{c} \otimes p_{c'})
\]Invertibility of $f$ and the fact that $F$ is an equivalence imply that $p_{c\otimes c'} = p_{c} \otimes p_{c'}$\ignore{$p\circ m_{\cC} = m_{\cC} \circ (p \boxtimes p)$}, and hence that $p$ is a positive monoidal natural automorphism of the identity unitary monoidal equivalence $\cC \to \cC$. It is shown in~\cite[Lem 3.19]{PenneysUnitary} that the group $\mathrm{Aut}^+(\id_{\cC})$ of positive monoidal natural automorphisms of the identity is isomorphic to the group of group homomorphisms $\Hom(\cU_{\cC}, \RR_{>0})$ from the \emph{universal grading group}~\cite[Def 4.14.2]{EGNO} of $\cC$ to $\RR_{>0}$. Since $\cC$ is a unitary fusion category, the group $\cU_{\cC}$ is finite and $\Hom(\cU_{\cC}, \RR_{>0})$ is trivial.
\end{proof}

Theorem~\ref{thm:trafo} is a direct consequence of Propositions~\ref{prop:trafostep1} and~\ref{prop:trafostep2}.

\begin{remark} If $\cC$ is a unitary multifusion category, the group $\Hom(\cU_{\cC}, \RR_{>0})$ is non-trivial if $\cC$ has more than one summand. In this case, there are non-trivial positive monoidal natural isomorphisms between unitary monoidal equivalences. In particular, Proposition~\ref{prop:trafostep2} and therefore also Theorem~\ref{thm:trafo} and Corollary~\ref{cor:Eq} do not hold for general multifusion categories.
\end{remark}

\subsection{Unitary braided fusion categories}\label{sec:theorem:braiding}
Turning our attention to braided monoidal categories, we now show that Theorem~\ref{thm:braiding} immediately follows from the proof of Theorem~\ref{thm:functor}.
\begin{proof}[Proof of Theorem~\ref{thm:braiding}] By Proposition~\ref{prop:replacefunctor}, every braided monoidal equivalence between unitary braided fusion categories is naturally isomorphic to a braided monoidal equivalence $\FF=(F,f): \cA \to \cB$ whose underlying functor $F$ is a dagger equivalence. Following the proof of Theorem~\ref{thm:functorstep1}, we factor $\FF$ into a unitary monoidal equivalence $(F,u): \cA \to \cB$ followed by a positive monoidal auto-equivalence $(\id_{\cB}, p): \cB \to \cB$, and show that $(\id_{\cB}, p)$ is in fact a braided monoidal auto-equivalence. Indeed, applying Proposition~\ref{prop:trick} to the compatibility condition
\[f_{a',a} F(\sigma^\cA_{a,a'}) = \sigma^\cB_{F(a), F(a')} f_{a,a'}
\]
shows that 
\[p_{F(a'), F(a)} \sigma^\cB_{F(a), F(a')} = \sigma^\cB_{F(a), F(a')} p_{F(a), F(a')}
\]
and hence that $(\id_{\cB}, p)$ is a braided monoidal auto-equivalence. By definition, $(F,u)$ is the composite of $\FF: \cA \to \cB$ followed by $(\id_{\cB}, p^{-1}):\cB \to \cB$ and is hence also a braided monoidal equivalence. The theorem then follows from Proposition~\ref{prop:functorstep2} and the fact that $(\id_{\cB}, p)$ is monoidally naturally isomorphic to the identity.
\end{proof}
\subsection*{Unitary module categories}\label{sec:theorem:modfunctor}
The proof of Theorem~\ref{thm:modfunctor} is completely analogous to the proof of Theorem~\ref{thm:functor}; we will first show that every module equivalence factors into a unitary module equivalence followed by a positive module equivalence, and then show that every positive module equivalence is trivial.
The proofs of Propositions~\ref{prop:modfunctorstep1} and~\ref{prop:modfunctorstep2} are completely analogous to the proofs of Theorem~\ref{thm:functorstep1} and Proposition~\ref{prop:functorstep2}. 
For the reader's convenience, we spell them out again, following the wording of Theorem~\ref{thm:functorstep1} and Proposition~\ref{prop:functorstep2} as closely as possible.

\begin{definition} Let ${}_{\cC}\cM$ be a unitary module category over a unitary fusion category. We say that a module auto-equivalence ${}_{\cC}\cM \to {}_{\cC}\cM$ is \emph{positive} if it is naturally isomorphic, as a module functor, to a module functor $(\id_{\cM}, p)$ whose underlying functor is the identity functor and whose coherence natural isomorphism $p_{c, m} : c\lact m \to c \lact m$ is positive. 
\end{definition}
\begin{prop} \label{prop:modfunctorstep1}Every module equivalence $F: {}_{\cC}\cM \to{}_{\cC}\cN$ between unitary module categories over a unitary fusion category $\cC$ is naturally isomorphic, as a module functor, to the composite of a unitary module equivalence followed by a positive module auto-equivalence.
\end{prop}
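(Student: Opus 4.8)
The plan is to transcribe the proof of Theorem~\ref{thm:functorstep1} into the module setting, with the tensor product $-\otimes-$ replaced by the action $-\lact-$, the monoidal coherence isomorphism $f_{c,c'}$ replaced by the module coherence isomorphism $f_{c,m}\colon F(c\lact m)\to c\lact F(m)$, and the associator $\alpha^\cD$ of $\cD$ replaced by the module associator $\mu^\cN$ of ${}_{\cC}\cN$. As a preliminary reduction, note that $\cM$ and $\cN$ are semisimple $\Cs$-categories, so Proposition~\ref{prop:replacefunctor} lets us replace the underlying functor $F\colon\cM\to\cN$ by a $*$-equivalence: transporting the module structure $f$ along a natural isomorphism $\widetilde F\cong F$ produces a module functor $(\widetilde F,\widetilde f)$ that is naturally isomorphic to $\FF$ as a module functor, so we may assume $\FF=(F,f)$ has $F$ a $*$-equivalence.

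Next I would polar-decompose the coherence datum. Applying Corollary~\ref{cor:polarnatural} to the natural isomorphism $f_{c,m}\colon F(c\lact m)\to c\lact F(m)$ yields a unitary natural isomorphism $u_{c,m}$ followed by a positive natural isomorphism $q_{c,m}\colon c\lact F(m)\to c\lact F(m)$. Because $F$ is a $*$-equivalence, $q$ is of the form $q_{c,m}=p_{c,F(m)}$ for a positive natural isomorphism $p_{c,n}\colon c\lact n\to c\lact n$ on $\cN$; exactly as in Theorem~\ref{thm:functorstep1}, this is the only step where we use that $\FF$ is an \emph{equivalence} rather than an arbitrary module functor.

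The substance of the argument is then to show that $(\id_\cN,p)$ is a module auto-equivalence, i.e.\ that $p$ satisfies the module-functor coherence equation. Writing the coherence equation for $\FF$ in the form $\mu^\cN_{c,c',F(m)}\circ f_{c\otimes c',m}=(\id_c\lact f_{c',m})\circ f_{c,c'\lact m}\circ F(\mu^\cM_{c,c',m})$ and substituting $f_{c,m}=p_{c,F(m)}\circ u_{c,m}$, I would use naturality of $p$ and of its square root $\sqrt p$ (natural by Proposition~\ref{prop:naturaltrafopolar}) to slide the $-\lact-$-tensored factors past one another, rewriting the right-hand side as $p^R_{c,c',F(m)}\circ u^R_{c,c',m}$ with $u^R$ unitary and $p^R_{c,c',n}:=(\id_c\lact p_{c',n})\circ p_{c,c'\lact n}$ positive (it equals $g^*g$ for $g:=(\id_c\lact\sqrt p_{c',n})\circ\sqrt p_{c,c'\lact n}$), while the left-hand side is $\mu^\cN_{c,c',F(m)}$ composed with $f_{c\otimes c',m}=p_{c\otimes c',F(m)}\circ u_{c\otimes c',m}$, whose positive polar part is $p_{c\otimes c',F(m)}$. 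Since the relevant positive parts are the unique positive square roots of the corresponding $XX^*$, Proposition~\ref{prop:trick} applied to this equation yields $p^R_{c,c',F(m)}\circ\mu^\cN_{c,c',F(m)}=\mu^\cN_{c,c',F(m)}\circ p_{c\otimes c',F(m)}$, and by essential surjectivity of $F$ this holds with $F(m)$ replaced by an arbitrary $n\in\cN$ — which is precisely the module-functor coherence equation for $(\id_\cN,p)$.

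Finally, $p$ is invertible as the positive part of an isomorphism, so $(\id_\cN,p^{-1})$ is again a module functor (as in Theorem~\ref{thm:functorstep1}); hence $(F,u)=(\id_\cN,p^{-1})\circ\FF$ is a module functor with $F$ a $*$-equivalence and $u$ unitary, i.e.\ a unitary module equivalence, and $\FF=(\id_\cN,p)\circ(F,u)$ exhibits the claimed factorization into a unitary module equivalence followed by the positive module auto-equivalence $(\id_\cN,p)$. I do not expect a genuine obstacle beyond Theorem~\ref{thm:functorstep1}: the one point requiring care is the bookkeeping in the third step — invoking naturality of $p$ and $\sqrt p$ enough times to bring both sides of the coherence equation into the exact ``(positive)$\,\circ\,$(unitary)'' shape required by Proposition~\ref{prop:trick}, and then recognizing the resulting identity as the coherence equation for $(\id_\cN,p)$, mirroring the associativity manipulation in the proof of Theorem~\ref{thm:functorstep1}.
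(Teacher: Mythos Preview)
Your proposal is correct and follows essentially the same route as the paper's proof: reduce to $F$ a $*$-equivalence via Proposition~\ref{prop:replacefunctor}, polar-decompose $f$, lift the positive part along $F$ to a positive $p$ on $\cN$, use naturality of $p$ and $\sqrt{p}$ to write $f^R=p^R u^R$ with $p^R$ positive, and apply Proposition~\ref{prop:trick} to obtain the coherence equation for $(\id_\cN,p)$. The only point you omit is that the paper also notes (in one line) that compatibility of $(\id_\cN,p)$ with the unitors $l_m\colon I_\cC\lact m\to m$ follows by the same trick; this is routine but is part of the module-functor axioms.
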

\begin{proof}

By Proposition~\ref{prop:replacefunctor}, every module equivalence ${}_{\cC}\cM \to {}_{\cC}\cN$ is naturally isomorphic to a module equivalence $(F, f)$  whose underlying functor $F$ is a $*$-functor. Polar decomposition of the natural isomorphism $f_{c,m}: F(c \lact m) \to c\lact F(m)$ results in a unitary natural isomorphism $u_{c,m}: F(c\lact m) \to c\lact F(m)$ followed by a positive natural isomorphism $q_{c,m}: c\lact F(m)\to c \lact F(m)$. Since $F$ is a $*$-equivalence, there is a positive natural isomorphism $p_{c, n}: c\lact n \to c \lact n$ such that $q_{c,m} = p_{c, F(m)}$.  

We first show that $(\id_{\cN}, p)$ is a module equivalence. In terms of the composite
\[f^R_{c,c',m}:=F(c\lact(c'\lact m))\to[f_{c,c'\lact m}] c\lact F(c' \lact m) \to[c\lact f_{c', m}] c\lact (c' \lact F(m))
\]
the coherence equation~\cite[Eq (7.6)]{EGNO} for $\FF$ can be written as follows:
\begin{equation}\label{eq:modulecoherence}f^R_{c,c',m} F(\mu^{\cM}_{c,c', m}) = \mu^\cN_{c,c',F(m)}f_{c\otimes c', m}
\end{equation}
where $\mu^\cM_{c,c',m}:( c\otimes c')\lact m \to c\lact (c' \lact m)$ and $\mu^\cN_{c,c',n}:(c\otimes c') \lact n \to c\lact(c' \lact n)$ are the coherence isomorphisms of $\cM$ and $\cN$, respectively.

As in the proof of Theorem~\ref{thm:functorstep1}, it follows from naturality of $p$ that \ignore{
\[p_{c, c'\lact F(m)} (c \lact u_{c', m} ) = (c \lact  u_{c',m}) p_{c, F(c'\lact m)}
\]and hence that }$f^R_{c,c',m}$ can be expressed as the composite 
\[f^R_{c,c',m} = p^R_{c,c', F(m)} u^R_{c,c', m}
\]
where $u^R$ and $p^R$ are defined as follows:
\begin{align}\nonumber
u^R_{c,c',c''}&:= F(c\lact (c'\lact m)) \to[u_{c,c'\lact m}] c \lact F(c'\lact m) \to[c\lact u_{c',m}] c\lact (c' \lact F(m))
\\\nonumber
p^R_{c,c',n}&:= c \lact (c'\lact n)\to[ p_{c, c' \lact n}] c \lact (c' \lact n)\to [c\lact p_{c' , n}]c\lact (c'\lact n) 
\end{align}
Again as in the proof of Theorem~\ref{thm:functorstep1}, it follows from positivity of $p $ and naturality of its square root $\sqrt{p}$ that the composite $p^R$ is positive. 
In particular, $p_{c,c',F(m)}^R$ is the unique positive square root of $f^R_{c,c',m}(f^R_{c,c',m})^\dagger$. Applying Proposition~\ref{prop:trick} to equation~\eqref{eq:modulecoherence} implies that \[p^R_{c,c',F(m)}\mu^\cN_{c, c', F(m)}  =\mu^\cN_{c, c', F(m)} p_{c\otimes c',F(m)}.
\]
Since $F$ is a $*$-equivalence, this is equivalent to the coherence equation for $(\id_{\cN}, p)$. Compatibility with the unitors~\cite[Eq (7.7)]{EGNO} $l_m: I_{\cC} \lact m \iso m$ follows analogously. 

By definition $(F, u)$ can be written as the composite of the module equivalence $\FF: {}_\cC\cM \to {}_{\cC}\cN$ followed by $(\id_{\cN}, p^{-1}): {}_{\cC}\cN \to {}_{\cC}\cN$ and is hence also a module equivalence.
\end{proof}

\begin{prop} \label{prop:modfunctorstep2}Every positive module auto-equivalence ${}_\cC\cM \to {}_\cC\cM$ of a unitary module category ${}_\cC\cM$ is naturally isomorphic, as a module functor, to the identity module equivalence. 
\end{prop}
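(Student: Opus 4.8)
The plan is to mirror the proof of Proposition~\ref{prop:functorstep2}, replacing the group of monoidal auto-equivalences by the group of module auto-equivalences. By definition, every positive module auto-equivalence is naturally isomorphic, as a module functor, to one of the form $\PP = (\id_{\cM}, p)$ with $p_{c,m}: c\lact m \to c\lact m$ positive. Since the group of module auto-equivalences of a unitary module category over a unitary fusion category is finite (this follows, as in~\cite[Thm 4.15]{ENOHomotopy}, from finiteness of the relevant coherence data; alternatively, module auto-equivalences of ${}_\cC\cM$ correspond to monoidal auto-equivalences of the unitary fusion category $\Fun_\cC(\cM,\cM)$ fixing the module structure, which form a finite group), there is a natural number $n$ with $\PP^n = (\id_{\cM}, p^n)$ naturally isomorphic, as a module functor, to the identity. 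In other words, there is a natural isomorphism $\eta: \id_{\cM} \To \id_{\cM}$ with $p_{c,m}^n = \eta_{c\lact m}^{-1}(c \lact \eta_m)$.

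Next I would run the same self-adjointness computation as in Proposition~\ref{prop:functorstep2}. Since $p$ is self-adjoint, repeated use of naturality of $\eta$ gives
\[
p_{c,m}^{2n} = (p_{c,m}^n)^\dagger p_{c,m}^n = (c\lact \eta_m^\dagger)(\eta_{c\lact m}^{-1})^\dagger \eta_{c\lact m}^{-1}(c\lact \eta_m) = (\eta_{c\lact m}^\dagger \eta_{c\lact m})^{-1}(c\lact \eta_m^\dagger \eta_m) = \mu_{c\lact m}^{-1}(c \lact \mu_m),
\]
where $\mu = \eta^\dagger \eta: \id_{\cM}\To \id_{\cM}$ is positive. (The middle step uses that $(c\lact \eta_m)$ and $\eta_{c\lact m}$ commute appropriately by naturality of $\eta$ along the morphism $\eta_m$, exactly as in the monoidal case.) Letting $\epsilon: \id_{\cM}\To \id_{\cM}$ denote the unique positive $2n$th root of $\mu$ — which exists and is invertible by Proposition~\ref{prop:naturaltrafopolar} applied componentwise — uniqueness of positive $2n$th roots of positive morphisms forces $p_{c,m} = \epsilon_{c\lact m}^{-1}(c\lact \epsilon_m)$. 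Hence $(\id_{\cM}, p)$ is naturally isomorphic, as a module functor, via $\epsilon$ to $(\id_{\cM}, \id_{-\lact-})$, the identity module equivalence.

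The only genuinely new ingredient compared to Proposition~\ref{prop:functorstep2} is the finiteness of the group of module auto-equivalences of ${}_\cC\cM$ up to module natural isomorphism; everything else is a verbatim translation in which $c \otimes c'$ is replaced by $c\lact m$ and the tensor-product-of-morphisms $\eta_c\otimes\eta_{c'}$ by $c\lact\eta_m$. I expect this finiteness to be the main point to pin down carefully: it should follow from the general finiteness results of~\cite{ENOHomotopy} together with the standard equivalence between module auto-equivalences of ${}_\cC\cM$ and an appropriate subgroup of monoidal auto-equivalences of the dual fusion category $\cC^*_\cM = \Fun_\cC(\cM,\cM)$, which is again a (unitary) fusion category and hence has finite $\Eq$. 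The remaining algebraic manipulations are routine once this is in hand.
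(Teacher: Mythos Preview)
Your proposal is correct and follows essentially the same approach as the paper. The one point to tighten is your justification for finiteness of $\Aut_{\cC}(\cM)$: module auto-equivalences are not monoidal auto-equivalences of $\cC^*_{\cM}=\End_{\cC}(\cM)$ but rather its invertible \emph{objects}, and $\End_{\cC}(\cM)$ is in general only multifusion; the paper's argument is simply that a multifusion category has finitely many isomorphism classes of simple (hence invertible) objects.
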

\begin{proof} 
By definition, every positive module equivalence is naturally isomorphic to a module equivalence $\PP = (\id_{\cM}, p)$ where $p_{c,m}: c\lact m \to c\lact m$ is positive. The group $\Aut_{\cC}(\cM)$ of module auto-equivalences ${}_{\cC}\cM \to {}_{\cC}\cM$ up to module natural isomorphisms is finite. (This can for example be seen by noting that the monoidal category $\End_{\cC}(\cM)$ of module endofunctors is a multifusion category, and that $\Aut_{\cC}(\cM)$ is the group of invertible objects in this multifusion category.) In particular, there is a natural number $n$ such that $\PP^n = (\id_{\cM}, p^n)$ is naturally isomorphic to the identity module functor. In other words, there is a natural isomorphism $\eta: \id_{\cM} \To \id_{\cM}$ such that $p_{c,m}^n =\eta_{c\lact m}^{-1} ( c \lact \eta_m)$. 
Since $p$ is self-adjoint, it follows from repeated use of naturality of $\eta$ that $p^{2n}_{c,m} = \mu_{c\lact m}^{-1}(c\lact \mu_m)$
where $\mu = \eta^\dagger \eta$ is positive. Denote the unique positive $2n$th root of $\mu$ by $\epsilon: \id_{\cM} \To \id_{\cM}$ and note that $\epsilon$ is invertible. It follows from uniqueness of $2n$th roots that $p_{c,m} = \epsilon_{c\lact m}^{-1}(c \lact \epsilon_{m})$
, and hence that $(\id_{\cM}, p)$ is naturally isomorphic, as a module functor to $(\id_{\cM}, \id_{-\lact-})$. 
\end{proof}
Theorem~\ref{thm:modfunctor} is a direct consequence of Propositions~\ref{prop:modfunctorstep1} and~\ref{prop:modfunctorstep2}.

\section{A monoidal $2$-categorical perspective}\label{sec:2Hilb}
\def\picgap{\hspace{0.3cm}} Most of the proofs in this paper were developed using the graphical calculus of the monoidal dagger $2$-category $\tHilb$ of finitely semisimple $\Cs$-categories, $*$-functors and natural transformations. (This monoidal $2$-category is equivalent to Baez's $2$-category of `finite-dimensional $2$-Hilbert spaces'~\cite{Baez} and is a unitary version of Kapranov and Voevodsky's $2$-category of `finite-dimensional $2$-vector spaces~\cite{kv}.) Many of the more subtle applications of naturality in the proofs of Theorem~\ref{thm:functorstep1} and Proposition~\ref{prop:functorstep2} become transparent once expressed in this graphical calculus. To give a flavour of such arguments, we sketch the relevant parts of the proof of Theorem~\ref{thm:functorstep1} (in the case that both $\cC$ and $\cD$ are unitary fusion categories) in this language. The following section is purely expositional and not relevant to the mathematical developments of Sections~\ref{sec:prelim} and~\ref{sec:main}.

Proposition~\ref{prop:replacefunctor} shows that all structural data in question --- the monoidal category $\cC$ with tensor product $m_{\cC} := -\otimes -: \cC \boxtimes \cC\to \cC$ and associator $\alpha: m_{\cC} \circ (m_{\cC} \boxtimes \id_{\cC} ) \To  m_{\cC} \circ ( \id_{\cC}\boxtimes m_{\cC})$, as well as the monoidal equivalence $\FF= (F,f)$ with underlying $*$-functor $F:\cC \to \cD$ and natural isomorphism $f:F \circ m_{\cC} \To m_{\cD} \circ (F\boxtimes F)$ --- are given by objects, $1$- and $2$-morphisms of $\tHilb$. As a monoidal $2$-category, $\tHilb$ admits a graphical calculus of \emph{surface diagrams}\footnote{Strictly speaking, surface diagrams form the graphical calculus of semistrict monoidal $2$-categories, so called \emph{Gray monoids}. The coherence theorem for weak 3-categories~\cite{Gurski} justifies working with this graphical calculus even in the context of weak monoidal $2$-categories, c.f.~\cite{Guthmann}. }  in 3-space~\cite{BMS}. We draw 1-morphism composition from right to left, 2-morphism composition from bottom to top, and depict the monoidal structure by layering surfaces behind one another, with the convention that tensor product occurs from back to front: that is, in a diagram for $A \boxtimes B$, the surface labeled $A$ appears in front of the surface labeled $B$ (see~\cite[Sec 2.1.2]{FTC} for a more careful description of our conventions). 

For example, the associator $\alpha$ of a unitary fusion category is depicted as follows:
\tikzset{slicem/.style={draw = gray!50, line width = 1.5pt}}
\[
\raisebox{-0.08cm}{%
\begin{tz}[td,scale=1]
\begin{scope}[xyplane=0]
\draw[slice] (0,0) to [out=up, in=\dl] (0.5,1) to [out=up, in=\dl] (1,2) to (1,3);
\draw[slice] (1,0) to [out=up, in=\dr] (0.5,1);
\draw[slice] (2,0) to [out=up, in=\dr] (1,2);
\end{scope}
\begin{scope}[xyplane=\h, on layer=superfront]
\draw[slice] (0,0) to [out=up, in=\dl] (1,2) to (1,3);
\draw[slice] (1,0) to [out=up, in=\dl] (1.5,1) to [out=up, in=\dr] (1,2);
\draw[slice] (2,0) to [out=up, in=\dr] (1.5,1);
\end{scope}
\begin{scope}[xzplane=0]
\draw[slice,short] (0,0) to (0, \h);
\draw[slice,short] (1,0) to (1,\h);
\draw[slice,short] (2,0) to (2,\h);
\end{scope}
\begin{scope}[xzplane=3]
\draw[slice,short] (1,0) to (1,\h);
\end{scope}
\coordinate (A) at (1.5,1,0.5*\h);
\draw[slicem] (1,0.5,0) to [out=up, in=\dr] (A) to [out=\ur, in=down] (1,1.5,\h);
\draw[slicem] (2,1,0) to [out=up, in=\dl] (A) to [out=\ul, in=down] (2,1,\h);
\node[dot] at (A){};
\node[obj, above left] at (-0.12,0,-0.08) {$\cC$};
\node[obj, above left] at (-0.12,1,-0.08) {$\cC$};
\node[obj, above left] at (-0.12,2,-0.08) {$\cC$};
\node[obj, above right] at (3.15,1,-0.08){$\cC$};
\node[obj, above] at (1.4,0.6,-0.08){$\cC$};
\node[obj, below] at (1.5,1.5,\h){$\cC$};
\node[omor, above left] at (1.9,1,0.05*\h){$m_{\cC}$};
\node[omor, above right] at (1,0.5,0.15){$m_{\cC}$};
\node[omor, below left] at (1.9,1,0.9*\h){$m_{\cC}$};
\node[omor, below right] at (1,1.5,\h-0.25){$m_{\cC}$};
\node[tmor, left] at ([xshift=-0.1cm]A){$\alpha$};
\end{tz}
}
\hspace{0.2cm}:\hspace{0.2cm}
\begin{tz}[scale=0.5]
\draw[slice] (0,0) to [out=up, in=\dl] (0.5,1) to [out=up, in=\dl] (1,2) to (1,3);
\draw[slice] (1,0) to [out=up, in=\dr] (0.5,1);
\draw[slice] (2,0) to [out=up, in=\dr] (1,2);
\node[dot,gray] at (0.5,1){};
\node[dot,gray] at (1,2){};
\node[obj,left] at (0,0) {$\cC$};
\node[obj,  left] at (1,0) {$\cC$};
\node[obj, left] at (2,0) {$\cC$};
\node[obj,  left] at (1, 3) {$\cC$};
\node[obj,  left] at (0.7, 1.6) {$\cC$};
\node[omor, left] at (0.5,1) {$m_{\cC}$};
\node[omor, left] at (1,2) {$m_{\cC}$};
\end{tz}
\hspace{0.25cm}\To[~\alpha~]\hspace{0cm}
\begin{tz}[scale=0.5,xscale=-1]
\draw[slice] (0,0) to [out=up, in=\dl] (0.5,1) to [out=up, in=\dl] (1,2) to (1,3);
\draw[slice] (1,0) to [out=up, in=\dr] (0.5,1);
\draw[slice] (2,0) to [out=up, in=\dr] (1,2);
\node[dot] at (0.5,1){};
\node[dot] at (1,2){};
\node[obj,left] at (0,0) {$\cC$};
\node[obj,  left] at (1,0) {$\cC$};
\node[obj, left] at (2,0) {$\cC$};
\node[obj,  left] at (1, 3) {$\cC$};
\node[obj,  right] at (0.7, 1.6) {$\cC$};
\node[omor, right] at (0.5,1) {$m_{\cC}$};
\node[omor, right] at (1,2) {$m_{\cC}$};
\end{tz}
\]
Here, the thick gray wires denote the $1$-morphism $m_{\cC}$, and the central black node denotes the $2$-isomorphism $\alpha$. (The thin gray bounding wires simply indicate the extent of the picture.)
For clarity, we have also explicitly depicted the source and target of $\alpha$; that source and target appear in the surface diagram as the bottom and top horizontal slices, respectively.  Note that we will often omit labels on regions, wires, and nodes, when it is clear from context what those labels should be.
The coherence natural isomorphism $f$ of a monoidal equivalence $\FF= (F,f): \cC \to \cD$ is depicted as follows:
\[\begin{tz}[td,xscale=1.7]
\begin{scope}[xyplane=0]
\draw[slice] (0,0) to [out=up, in=\dl] node[pos=0.1] (tl){} (0.5,1) to (0.5,2);
\draw[slice] (1,0) to [out=up, in=\dr]node[pos=0.1] (tr){} (0.5,1);
\end{scope}
\begin{scope}[xyplane=\h, on layer=superfront]
\draw[slice] (0,0) to [out=up, in=\dl] (0.5,1) to (0.5,2);
\draw[slice] (1,0) to [out=up, in=\dr] (0.5,1);
\end{scope}
\begin{scope}[xzplane=0]
\draw[slice,short] (0,0) to (0, \h);
\draw[slice,short] (1,0) to (1,\h);
\end{scope}
\begin{scope}[xzplane=2]
\draw[slice,short] (0.5,0) to (0.5,\h);
\end{scope}
\begin{scope}[xzplane=1]
\draw[slicem,short] (0.5,0) to (0.5,\h);
\end{scope}
\coordinate (A) at (1,0.5,0.5*\h);
\draw[wire] (1.5,0.5, 0) to [out=up, in=\dl] (A) to [out=\ur, in=down] (0.2,1,\h);
\draw[wire] (A) to [out=\ur, in=down] (0.2,0,\h);
\node[dot] at (A){};
\node[above left, tmor] at (A) {$f$};
\node[obj, above left] at (-0.0,0,-0.0) {$\cC$};
\node[obj, above left] at (-0.0,1,-0.0) {$\cC$};
\node[obj, below right] at (2,0.5,\h-0.02){$\cD$};
\node[omor, above left] at (1.5,0.5,0){$F$};
\node[omor, below left] at (0.2,1,\h){$F$};
\node[omor, below left] at (0.2,0,\h){$F$};
\node[omor, above right] at (1,0.5, 0.1) {$m_{\cC}$};
\node[omor, below left] at (1,0.5, \h-0.1) {$m_{\cD}$};
\end{tz}
\]
The proof of Theorem~\ref{thm:functorstep1} begins by polarly decomposing the natural isomorphism $f$ into a unitary natural isomorphism $u:F \circ m_{\cC} \To m_{\cD} \circ (F \boxtimes F)$ followed by a positive natural isomorphism $p:m_{\cD} \To m_{\cD}$, depicted as follows:
\[
\begin{tz}[td,xscale=1.7]
\begin{scope}[xyplane=0]
\draw[slice] (0,0) to [out=up, in=\dl] node[pos=0.1] (tl){} (0.5,1) to (0.5,2);
\draw[slice] (1,0) to [out=up, in=\dr]node[pos=0.1] (tr){} (0.5,1);
\end{scope}
\begin{scope}[xyplane=\h, on layer=superfront]
\draw[slice] (0,0) to [out=up, in=\dl] (0.5,1) to (0.5,2);
\draw[slice] (1,0) to [out=up, in=\dr] (0.5,1);
\end{scope}
\begin{scope}[xzplane=0]
\draw[slice,short] (0,0) to (0, \h);
\draw[slice,short] (1,0) to (1,\h);
\end{scope}
\begin{scope}[xzplane=2]
\draw[slice,short] (0.5,0) to (0.5,\h);
\end{scope}
\begin{scope}[xzplane=1]
\draw[slicem,short] (0.5,0) to (0.5,\h);
\end{scope}
\coordinate (A) at (1,0.5,0.5*\h);
\draw[wire] (1.5,0.5, 0) to [out=up, in=\dl] (A) to [out=\ur, in=down] (0.2,1,\h);
\draw[wire] (A) to [out=\ur, in=down] (0.2,0,\h);
\node[dot] at (A){};
\node[above left, tmor] at (A) {$f$};
\end{tz}
\picgap=\picgap
\begin{tz}[td,xscale=1.7]
\begin{scope}[xyplane=0]
\draw[slice] (0,0) to [out=up, in=\dl] node[pos=0.1] (tl){} (0.5,1) to (0.5,2);
\draw[slice] (1,0) to [out=up, in=\dr]node[pos=0.1] (tr){} (0.5,1);
\end{scope}
\begin{scope}[xyplane=\h, on layer=superfront]
\draw[slice] (0,0) to [out=up, in=\dl] (0.5,1) to (0.5,2);
\draw[slice] (1,0) to [out=up, in=\dr] (0.5,1);
\end{scope}
\begin{scope}[xzplane=0]
\draw[slice,short] (0,0) to (0, \h);
\draw[slice,short] (1,0) to (1,\h);
\end{scope}
\begin{scope}[xzplane=2]
\draw[slice,short] (0.5,0) to (0.5,\h);
\end{scope}
\begin{scope}[xzplane=1]
\draw[slicem,short] (0.5,0) to (0.5,\h);
\end{scope}
\coordinate (A) at (1,0.5,0.5*\h);
\draw[wire] (1.5,0.5, 0) to [out=up, in=\dl] (A) to [out=\ur, in=down] (0.2,1,\h);
\draw[wire] (A) to [out=\ur, in=down] (0.2,0,\h);
\node[dot] at (A){};
\node[dot] (B) at (1,0.5, 0.75*\h){};
\node[above left, tmor] at (A) {$u$};
\node[above left, tmor] at (B) {$p$};
\end{tz}
 \]
Next, we use naturality of $p$ to re-expresses the composite $f^R$, defined in equation~\eqref{eq:fR}, as the composite of equation~\eqref{eq:rewrittengraphical}. Graphically, this corresponds to the following simple isotopy:
 \def\top{1.5*\h}
 \def\topb{0.5*\h}
 \def\topt{0.7*\h}
 \[f^R =\picgap \begin{tz}[td,scale=1,xscale=1.75,yscale=1]
\begin{scope}[xyplane=0]
\draw[slice] (0,0) to [out=up, in=\dl] (1,2) to (1,3);
\draw[slice] (1,0) to [out=up, in=\dl] (1.5,1) to [out=up, in=\dr] (1,2);
\draw[slice] (2,0) to [out=up, in=\dr] (1.5,1);
\end{scope}
\begin{scope}[xyplane=\top, on layer=superfront]
\draw[slice] (0,0) to [out=up, in=\dl] (1,2) to (1,3);
\draw[slice] (1,0) to [out=up, in=\dl] (1.5,1) to [out=up, in=\dr] (1,2);
\draw[slice] (2,0) to [out=up, in=\dr] (1.5,1);
\end{scope}
\begin{scope}[xzplane=0, on layer =superfront]
\draw[slice,short] (0,0) to (0, \top);
\draw[slice,short] (1,0) to (1,\top);
\draw[slice,short] (2,0) to (2,\top);
\end{scope}
\begin{scope}[xzplane=3]
\draw[slice,short] (1,0) to (1,\top);
\end{scope}
\begin{scope}[xzplane=2]
\draw[slicem,short] (1,0) to (1,\top);
\end{scope}
\begin{scope}[xzplane=1]
\draw[slicem,short] (1.5,0) to (1.5,\top);
\end{scope}
\coordinate (A) at (2,1,0.5*\h);
\node[dot] (B) at (1, 1.5, \topt){};
\draw[wire] (2.5,1, 0) to [out=up, in=\dl] (A) to [out=\ur, in=\dl] (B) to [out=\ur, in=down] (0.2,1,1.5*\h);
\draw[wire] (B) to [out=\ur, in=down] (0.2, 2, 1.5*\h);
\draw[wire] (A) to [out=\ur, in=down, looseness=1] (0.2,0.,1.5*\h);
\node[dot] at (A){};
\node[tmor, above left] at (A) {$f$};
\node[tmor, below right] at (B) {$f$};
\end{tz}
\picgap=\picgap
\begin{tz}[td,scale=1,xscale=1.75,yscale=1]
\begin{scope}[xyplane=0]
\draw[slice] (0,0) to [out=up, in=\dl] (1,2) to (1,3);
\draw[slice] (1,0) to [out=up, in=\dl] (1.5,1) to [out=up, in=\dr] (1,2);
\draw[slice] (2,0) to [out=up, in=\dr] (1.5,1);
\end{scope}
\begin{scope}[xyplane=\top, on layer=superfront]
\draw[slice] (0,0) to [out=up, in=\dl] (1,2) to (1,3);
\draw[slice] (1,0) to [out=up, in=\dl] (1.5,1) to [out=up, in=\dr] (1,2);
\draw[slice] (2,0) to [out=up, in=\dr] (1.5,1);
\end{scope}
\begin{scope}[xzplane=0, on layer =superfront]
\draw[slice,short] (0,0) to (0, \top);
\draw[slice,short] (1,0) to (1,\top);
\draw[slice,short] (2,0) to (2,\top);
\end{scope}
\begin{scope}[xzplane=3]
\draw[slice,short] (1,0) to (1,\top);
\end{scope}
\begin{scope}[xzplane=2]
\draw[slicem,short] (1,0) to (1,\top);
\end{scope}
\begin{scope}[xzplane=1]
\draw[slicem,short] (1.5,0) to (1.5,\top);
\end{scope}
\coordinate (A) at (2,1,0.5*\h);
\node[dot] (B) at (1, 1.5, \topt){};
\draw[wire] (2.5,1, 0) to [out=up, in=\dl] (A) to [out=\ur, in=\dl] (B) to [out=\ur, in=down] (0.2,1,1.5*\h);
\draw[wire] (B) to [out=\ur, in=down] (0.2, 2, 1.5*\h);
\draw[wire] (A) to [out=\ur, in=down, looseness=1] (0.2,0.,1.5*\h);
\node[dot] at (A){};
\node[tmor] at (2.2, 1.1, 0.5*\h){$u$};
\node[tmor] at (0.8, 1.4, 0.7*\h) {$u$};
\node[dot] (L)at (2, 1, 0.6*\h){};
\node[tmor] at (2.2, 1.1, 0.6*\h) {$p$};
\node[dot] (R)at (1, 1.5, 0.8*\h){};
\node[tmor] at (0.8, 1.4, 0.84*\h) {$p$};
\end{tz}
\picgap=\picgap
\begin{tz}[td,scale=1,xscale=1.75,yscale=1]
\begin{scope}[xyplane=0]
\draw[slice] (0,0) to [out=up, in=\dl] (1,2) to (1,3);
\draw[slice] (1,0) to [out=up, in=\dl] (1.5,1) to [out=up, in=\dr] (1,2);
\draw[slice] (2,0) to [out=up, in=\dr] (1.5,1);
\end{scope}
\begin{scope}[xyplane=\top, on layer=superfront]
\draw[slice] (0,0) to [out=up, in=\dl] (1,2) to (1,3);
\draw[slice] (1,0) to [out=up, in=\dl] (1.5,1) to [out=up, in=\dr] (1,2);
\draw[slice] (2,0) to [out=up, in=\dr] (1.5,1);
\end{scope}
\begin{scope}[xzplane=0, on layer =superfront]
\draw[slice,short] (0,0) to (0, \top);
\draw[slice,short] (1,0) to (1,\top);
\draw[slice,short] (2,0) to (2,\top);
\end{scope}
\begin{scope}[xzplane=3]
\draw[slice,short] (1,0) to (1,\top);
\end{scope}
\begin{scope}[xzplane=2]
\draw[slicem,short] (1,0) to (1,\top);
\end{scope}
\begin{scope}[xzplane=1]
\draw[slicem,short] (1.5,0) to (1.5,\top);
\end{scope}
\coordinate (A) at (2,1,0.5*\h);
\node[dot] (B) at (1, 1.5, \topt){};
\draw[wire] (2.5,1, 0) to [out=up, in=\dl] (A) to [out=\ur, in=\dl] (B) to [out=\ur, in=down] (0.2,1,1.5*\h);
\draw[wire] (B) to [out=\ur, in=down] (0.2, 2, 1.5*\h);
\draw[wire] (A) to [out=\ur, in=down, looseness=1] (0.2,0.,1.5*\h);
\node[dot] at (A){};
\node[tmor] at (2.2, 1.1, 0.5*\h){$u$};
\node[tmor] at (0.8, 1.4, 0.7*\h) {$u$};
\node[dot] (L)at (2, 1, 1.1*\h){};
\node[tmor] at (2.2, 1.1, 1.1*\h) {$p$};
\node[dot] (R)at (1, 1.5, 1.1*\h){};
\node[tmor] at (1.15, 1.5, 1.1*\h) {$p$};
\end{tz}
 \]
 In particular, the composites $p^R$ and $u^R$ of equation~\eqref{eq:pR} are depicted as follows:
  \def\top{1.5*\h}
 \[\begin{tz}[td,scale=1,xscale=1.75,yscale=1,scale=0.7]
\begin{scope}[xyplane=0]
\draw[slice] (0,0) to [out=up, in=\dl] (1,2) to (1,3);
\draw[slice] (1,0) to [out=up, in=\dl] (1.5,1) to [out=up, in=\dr] (1,2);
\draw[slice] (2,0) to [out=up, in=\dr] (1.5,1);
\end{scope}
\begin{scope}[xyplane=\top, on layer=superfront]
\draw[slice] (0,0) to [out=up, in=\dl] (1,2) to (1,3);
\draw[slice] (1,0) to [out=up, in=\dl] (1.5,1) to [out=up, in=\dr] (1,2);
\draw[slice] (2,0) to [out=up, in=\dr] (1.5,1);
\end{scope}
\begin{scope}[xzplane=0, on layer =superfront]
\draw[slice,short] (0,0) to (0, \top);
\draw[slice,short] (1,0) to (1,\top);
\draw[slice,short] (2,0) to (2,\top);
\end{scope}
\begin{scope}[xzplane=3]
\draw[slice,short] (1,0) to (1,\top);
\end{scope}
\begin{scope}[xzplane=2]
\draw[slicem,short] (1,0) to (1,\top);
\end{scope}
\begin{scope}[xzplane=1]
\draw[slicem,short] (1.5,0) to (1.5,\top);
\end{scope}
\coordinate (A) at (2,1,0.5*\h);
\node[dot] (L)at (2, 1, 0.75*\h){};
\node[tmor,left] at (2, 1, 0.75*\h) {$p$};
\node[dot] (R)at (1, 1.5, 0.75*\h){};
\node[tmor,left] at (1, 1.5, 0.75*\h) {$p$};
\end{tz}
\hspace{2cm} \def\top{1.5*\h}
\begin{tz}[td,scale=1,xscale=1.75,yscale=1,scale=0.7]
\begin{scope}[xyplane=0]
\draw[slice] (0,0) to [out=up, in=\dl] (1,2) to (1,3);
\draw[slice] (1,0) to [out=up, in=\dl] (1.5,1) to [out=up, in=\dr] (1,2);
\draw[slice] (2,0) to [out=up, in=\dr] (1.5,1);
\end{scope}
\begin{scope}[xyplane=\top, on layer=superfront]
\draw[slice] (0,0) to [out=up, in=\dl] (1,2) to (1,3);
\draw[slice] (1,0) to [out=up, in=\dl] (1.5,1) to [out=up, in=\dr] (1,2);
\draw[slice] (2,0) to [out=up, in=\dr] (1.5,1);
\end{scope}
\begin{scope}[xzplane=0, on layer =superfront]
\draw[slice,short] (0,0) to (0, \top);
\draw[slice,short] (1,0) to (1,\top);
\draw[slice,short] (2,0) to (2,\top);
\end{scope}
\begin{scope}[xzplane=3]
\draw[slice,short] (1,0) to (1,\top);
\end{scope}
\begin{scope}[xzplane=2]
\draw[slicem,short] (1,0) to (1,\top);
\end{scope}
\begin{scope}[xzplane=1]
\draw[slicem,short] (1.5,0) to (1.5,\top);
\end{scope}
\coordinate (A) at (2,1,0.5*\h);
\node[dot] (B) at (1, 1.5, \topt){};
\draw[wire] (2.5,1, 0) to [out=up, in=\dl] (A) to [out=\ur, in=\dl] (B) to [out=\ur, in=down] (0.2,1,1.5*\h);
\draw[wire] (B) to [out=\ur, in=down] (0.2, 2, 1.5*\h);
\draw[wire] (A) to [out=\ur, in=down, looseness=1] (0.2,0.,1.5*\h);
\node[dot] at (A){};
\node[tmor,above left] at (A){$u$};
\node[tmor,below right] at (B) {$u$};
\end{tz}\]
Unitarity of $u^R$ and positivity of $p^R$ evidently follow from unitarity of $u$ and positivity of $p$, proving that $f^R = p^R u^R$ is the unique polar decomposition of $f^R$, which in turn allows us to conclude the proof of Theorem~\ref{thm:functorstep1} by applying Proposition~\ref{prop:trick}.

\bibliographystyle{initalpha}
\bibliography{UFC}
\end{document}